\newtheorem{theorem}{Theorem}
\newtheorem{proposition}{Proposition}[section]
\newtheorem{propdef}[proposition]{Proposition \& Definition}
\newtheorem{lemma}[proposition]{Lemma}
\newtheorem{corollaire}[proposition]{Corollary}
\theoremstyle{definition}
\newtheorem{definition}[proposition]{Definition}
\newtheorem{exemple}[proposition]{Exemple}
\newtheorem{remarque}[proposition]{Remark}
\newcommand{\bl}{\begin{lemma}}
\newcommand{\bp}{\begin{proposition}}
\newcommand{\bt}{\begin{theorem}}
\newcommand{\bc}{\begin{corollaire}}
\newcommand{\be}{\begin{equation}}
\newcommand{\bee}{\begin{equation*}}
\newcommand{\bd}{\begin{definition}}
\newcommand{\bdp}{\begin{definitionproposition}}
\newcommand{\bex}{\begin{exemple}}
\newcommand{\br}{\begin{remarque}}
\newcommand{\bpr}{\begin{proof}}
\newcommand{\el}{\end{lemma}}
\newcommand{\ep}{\end{proposition}}
\newcommand{\et}{\end{theorem}}
\newcommand{\ec}{\end{corollaire}}
\newcommand{\ee}{\end{equation}}
\newcommand{\eee}{\end{equation*}}
\newcommand{\ed}{\end{definition}}
\newcommand{\edp}{\end{definitionproposition}}
\newcommand{\eex}{\end{exemple}}
\newcommand{\er}{\end{remarque}}
\newcommand{\epr}{\end{proof}}
\newcommand{\thmref}[1]{Theorem~\ref{#1}}
\newcommand{\propref}[1]{Proposition~\ref{#1}}
\newcommand{\lemref}[1]{Lemma~\ref{#1}}
\newcommand{\remref}[1]{Remark~\ref{#1}}
\newcommand{\defref}[1]{Definition~\ref{#1}}
\def\R{{\mathbb R}}
\def\ov{\overline}
\newcommand{\cal}[1]{\mathcal #1}
\def\t\tc L{{\widetilde{\mathcal L}}}
\def\1{{\boldsymbol 1}}
\def\gd{{\mathfrak{d}}}
\def\gC{{\mathfrak{C}}}
\def\gH{{\mathfrak{H}}}
\def\tc{{\mathtt c}}
\def\tv{{\mathtt v}}
\def\tw{{\mathtt w}}
\def\R{\mathbb{R}}
\def\Z{\mathbb{Z}}
\def\im{{\rm Im\,}}
\def\id{{\rm id}}
\def\codim{{\rm codim\,}}
\def\pr{{\rm pr}}
\def\tN{{\widetilde{N}}}
\def\rc{{\mathring{\tc}}}
 \newcommand{\menos}{\backslash}
\newcommand{\bi}[2]{{#1}^{^{#2}}}
\newcommand{\dos}[2]{{#1}_{_{#2}}}
\newcommand{\Hiru}[3]{{#1}^{^{#2}}{( #3 )}}
\newcommand{\hiru}[3]{{#1}_{_{#2}}{( #3 )}}
\newcommand{\lau}[4]{{#1}^{^{#2}}_{_{#3}}{\left( #4 \right)}}
\newcommand{\parrn}[1]{\addtocounter{proposition}{1} \medskip {\noindent \bf \theparrafo  \  #1.}}
\newcommand{\IH}{\mathscr H}
\newcommand{\nat}[1]{{#1}^\natural}
\title[Lefschetz duality]{Lefschetz duality for intersection (co)homology}
\date{\today}
\author{Martintxo Saralegi-Aranguren}
\address{Laboratoire de Math{\'e}matiques de Lens\\  
      EA 2462 \\
      Universit\'e d'Artois\\
         SP18, rue Jean Souvraz\\
          62307 Lens Cedex\\
         France}
\email{martin.saraleguiaranguren@univ-artois.fr}
\thanks{The author would like to thank Daniel Tanr for many helpful suggestions and comments in the preparation of this work}
\keywords{Intersection homology, Lefschetz duality}
\subjclass[2000]{55N33, 	55M05, 57N80}
\begin{document}

\begin{abstract} 
We prove the Lefschetz duality for intersection (co)homology in the framework of $\partial$-pesudomanifolds. We work with general perversities and without restriction on the coefficient ring.
\end{abstract}

\maketitle

Given an orientable $n$-dimensional $\partial$-compact pseudomanifold $X$, the following Lefschetz duality
$$
\dos {\cal D} X \colon \lau H * {D\ov p} {X;R} \to \lau H {\ov p} {n-*} {X,\partial X;R}  
$$
has been proved in \cite{MR3046315} (see also \cite{MR3175250,LibroGreg}). In these papers, the coefficient ring is a field and the general perversity $\ov p$ verifies $\ov p \leq \ov t$, where $\ov t$ is the top perversity. The RHS is the intersection homology of $X$. The LHS is the intersection cohomology defined by using $D\ov p$-intersection cochains, where $D \ov p$ is the complementary perversity of $ \ov p$. The operator $\dos {\cal D} X $ is the cap-product by an orientation class of $X$.

The hypothesis $\ov p \leq \ov t$ can be eliminated if we avoid the $\ov p$-allowable simplices contained in the singular part of $X$. This can be done by using the  tame $\ov p$-intersection homology  $\lau {\gH} {\ov p} {*} {X;R}$   (see \cite{CST3,LibroGreg}) for which the Lefschetz duality becomes
\be\label{Lef1}
\dos {\cal D} X \colon \lau H * {D\ov p} {X;R} \to \lau \gH {\ov p} {n-*} {X,\partial X;R}.
\ee
This is proved in \cite{LibroGreg} (see also \cite{MR3046315}), where the hypothesis \emph{$R$ is a field} has been weakened by: \emph{$X$ is a locally $(D\ov{p},R)$-torsion free $\partial$-pseudomanifold}.

 Unfortunately, this result does not extend to a general ring. For example, for the closed cone  $X=\tc \R\mathbb P^3$ we get 
 $
\lau H 2 {\ov p} {X;\Z} = \Z_2\not= 0 =\lau \gH {\ov p} {2} {X,\partial X;\Z} , 
 $
 where $\ov p = D\ov p$ is the perversity taking the value 1 on the apex of $X$.
 In order to eliminate the  torsion condition we use, as it is introduced in \cite{CST77} for the Poincar\'e duality,  the blown-up intersection cohomology $\lau \IH * {\ov p} {X;R}$
 and we get the Lefschetz duality
 \be\label{Lef2}
\dos {\cal D} X \colon \lau \IH * {\ov p} {X;R} \to \lau \gH {\ov p} {n-*} {X,\partial X;R}  ,
\ee
 without restrictions on the coefficient ring $R$ (see \thmref{A}).
Since $\lau \IH  *{\ov p}  {X;R} \cong \lau H * {D\ov p} {X;R}$, in the case of locally $(D\ov{p},R)$-torsion free  $\partial$-pseudomanifolds (see \cite[Theorem F]{CST5}) then we observe that \eqref{Lef2} generalizes the Lefschetz duality  \eqref{Lef1} of \cite{LibroGreg}.

Section 4 is devoted to the proof of this Theorem. 
The method of the proof uses the Poincar\'e duality established in \cite{CST77} for pseudomanifolds
(see also \cite{CST44} by using sheaf theory).
In the same way as a boundary manifold is not a manifold, the space $X$ is not a  pseudomanifold and Poincar\'e duality does not apply directly. Then we refine the stratification of $X$ in order to obtain a pseudomanifold $\nat X$ (see \cite{LibroGreg} for another point of view). This is done in Section 1.
The relationship between tame intersection homology (resp. blown-up intersection cohomology) of $X$ and that of $\nat X$  is done in Section 2 (resp. Section 3).

The transformation of a $\partial$-pseudomanifold $ X$ into a pseudomanifold $\nat X$ by refining the stratification as well as the comparison of  both associated  intersection homologies,  by modifying the perversity on $\nat X$, has been done for the first time on  
\cite{MR3028755} in the category of PL-pseudomanifolds.
This is the key point of the work.

\bigskip

We fix for the sequel a commutative ring $R$ with unity. All  (co)homologies in this work are considered with coefficients in $R$. For a compact topological space $X$, we denote by $\tc X= X \times [0,1]/ X \times \{ 0\}$ the \emph{closed cone} on $X$ and $\rc X = X \times [0,1[/ X \times \{ 0\}$ the \emph{open cone} on $X$. A point of a cone is denoted by $[a,t]$. The apex of these cones is $\tv =[-,0]$.

\tableofcontents

\section{Pseudomanifolds}

We present in this section  the geometrical objets used in this work.

\bd\label{def:espacefiltre} 
A \emph{filtered space} is a  Hausdorff topological space endowed with a filtration by closed sub-spaces 
\bee
\emptyset = X_{-1} \subseteq X_0\subseteq X_1\subseteq\ldots \subseteq X_{n-1} \subsetneq X_n=X.
\eee
The
\emph{formal dimension} of $X$ is $\dim X=n$.

The non-empty  connected components of $X_{i}\backslash X_{i-1}$ are the  \emph{strata} of $X$.
Those of $X_n\backslash X_{n-1}$ are \emph{regular strata}, while the others are  \emph{singular strata.}  
The family of strata of $X$ is denoted by $\cal S_X$ or simply $\cal S$.
The {\em singular set} is $X_{n-1}$, denoted by $\Sigma_X$ or simply $\Sigma$. 
 The {\em formal dimension} of a stratum $S \subset X_i\backslash X_{i-1}$ is $\dim S=i$.
The {\em formal codimension} of $S$ is $\codim S = \dim X -\dim S$.
\ed

\bd \label{indu} A subset $U$ of $X$ can be provided with the \emph{induced filtration,} defined by
$U_i = U \cap X_{i}$,$i\in \{0, \ldots,n\}$. 
If $M$ is a topological manifold, with boundary or not, the \emph{product filtration} defined by 
$\left(M \times X\right) _i = M \times X_{i}$, $i\in \{0, \ldots,n\}$.%

When $X$ is compact, the \emph{cone filtration} of  cones $\rc X$ and $\tc X$ are defined respectively by $(\rc X)_i = \rc X_{i-1}$ and $(\tc X)_i = \tc X_{i-1}$, $i \in \{0, \ldots,n+1\}$. We set $\rc \emptyset =\tc \emptyset = \tv$, the apex of the cones $\rc X$ and  $\tc X$.
In the closed cone $\tc X$  we also consider the following filtration
\bee
 \bigg( \{\tv\} \cup X_0\bigg )
 \subset \cdots \subset  
\bigg(  \tc X_{i-1} \cup   X_{i} \bigg)\subset \cdots \subset 
 \tc X,
\eee
$i \in \{1, \ldots, n\}$, denoted by $\nat{(\tc X)}$. The induced filtrations on $X$ by the filtrations $\tc X$ and 
$\nat{\tc X}$ are the same: the original filtration of $X$.
Notice that we have the equalities:
$
\nat{(\tc X)} \menos X = \rc X =( \tc X )\menos X$, as filtered spaces.
\ed

\smallskip

The more restrictive concept of stratified space provides a better behavior of the intersection (co)homology with regard to  continuous maps.

\bd
A \emph{stratified space} is a filtered space verifying the following frontier condition: 
for any two strata $S, S'\in \cal S_X$ such that
$S\cap \overline{S'}\neq \emptyset$ then $S\subset \overline{S'}$.
\ed

The relation $S \preceq S'$, defined on the set of strata by $S\subset \overline{S'}$, is an order relation (see \cite[Proposition A.22]{CST1}). The notation $S \prec S'$ means $S \preceq S'$ and $S \ne S'$.

\begin{definition}\label{def:applistratifieeforte}
Let $f\colon X \to Y$ be a continuous map between two stratified spaces.
The map $f$ is a  \emph{stratified map,}  if it sends a stratum $S$ of $X$ on a stratum $\bi S f$ of $Y$, $f(S) \subset \bi S f$, verifying $\codim S \geq \codim \bi Sf$.

When $f$ is a homeomorphism we say that $f$ is a \emph{stratified homeomorphism}  if $f,f^{-1}$ are stratified maps.

Notice that any inclusion $\i \colon Y \hookrightarrow X$ is a stratified map if we endow $Y$ with the induced filtration and $\dim Y = \dim X$.  This last condition is equivalent to $Y \not\subset \Sigma_X$. The induced stratification is
$
\cal S_Y = \{ (S \cap Y)_{cc} \ne \emptyset / S \in \cal S _X\},
$
where ${}_{cc}$ denotes \emph{a connected component}.

\end{definition}

Pseudomanifolds, introduced by  Goresky and MacPherson in \cite{MR572580,MR696691}, are filtered spaces having a conical local structure.

\begin{definition}\label{pseudo}
A $n$-\emph{dimensional pseudomanifold} is a filtered space,
$$
\emptyset\subset X_0 \subseteq X_1 \subseteq \cdots \subseteq X_{n-2} \subseteq X_{n-1} \subsetneq X_n =X,
$$
such that, for each $i$, 
$X_i\backslash X_{i-1}$ is a topological manifold of dimension $i$ or the empty set. 
Moreover, for each point $x \in X_i \backslash X_{i-1}$, $i\neq n$, there exist
\begin{enumerate}[(i)]
\item an open neighborhood $V$ of  $x$ in $X$, endowed with the induced filtration,
\item an open neighborhood $U$ of $x$ in $X_i\backslash X_{i-1}$, 
\item an $(n-i-1)$-dimensional compact pseudomanifold $L$ where the open cone, $\rc L$, is provided with the cone filtration,
\item a homeomorphism, $\varphi \colon U \times \rc L\to V$, 
such that
\begin{enumerate}[(a)]
\item $\varphi(u,\tv)=u$, for each $u\in U$, 
\item $\varphi(U\times \rc L_{j})=V\cap X_{i+j+1}$, for each $j\in \{0,\ldots,n-i-1\}$.
\end{enumerate}
\end{enumerate}
The pair  $(V,\varphi)$ is a   \emph{conical chart} of $x$
 and the filtered space $L$ is a \emph{link} of $x$. 

The pseudomanifold $X$ is a \emph{classical pseudomanifold} when $X_{n-1} = X_{n-2}$, that is , when the codimension one strata do not appear.
\end{definition}

In this framework  the notion of boundary appears as follows (see \cite{MR3046315,MR3175250}).

\begin{definition}\label{partial}
A \emph{$n$-dimensional $\partial$-pseudomanifold} is a pair $(X,\partial X)$, where $\partial X \subset X$ is a closed subset,  together
with a filtration on $\emptyset\subseteq X_0 \subseteq \dots \subseteq X_{n-1} \subsetneq X_n =X$ such that

\begin{enumerate}[(a)]
\item $X \menos \partial X$, with the induced filtration $(X \menos \partial X)_i = X_i \menos \partial X$, is an $n$-dimensional
pseudomanifold,

\item the subset $\partial X$, with the induced filtration $(\partial X)_{i-1} = \partial X \cap X_i$, is an $(n -1)$-dimensional pseudomanifold,

\item the subset $\partial X$ has an open collar neighborhood in $X$, that is, a neighborhood $N$ with a
homeomorphism $$\flat \colon N \to\partial X  \times  ]0,1] ,$$ preserving the filtrations of \defref{indu} and sending  
$\partial X$ to $ \partial X \times \{1\} $ by $x \mapsto (x,1)$.
\end{enumerate}

Notice that $X_k \cap \partial X\ne \emptyset$ implies $k>0$.
The subset $\partial X$ is called the {\em boundary } of $X$.
We will often abuse notation by referring to the "$\partial$- pseudomanifold X", leaving
$\partial X$ tacit.
\end{definition}

\parrn{Refinement of $X$} \label{ref}	 
The local structure of a $\partial$-pseudomanifold $X$ can be described as follows.
A point of $X \menos \partial X$ possesses a {\em conical chart} $(V,\varphi)$ in the sense of \defref{pseudo}.
A point $x \in \partial X   \cap (X_k  \menos X_{k-1}) $ possesses a {\em conical chart} $(V,\varphi)$ in the following sense:
\begin{enumerate}[(i)]
\item $V \subset X$ is an open neighborhood of $x$,  endowed with the induced filtration,
\item $\varphi \colon V \to  \R^{n-1} \times ]0,1] $ is a homeomorphism with $\varphi(x) = (0,1)$ (case $k=n$), 
\item $\varphi \colon V \to  \R^{k-1} \times \rc L\times ]0,1] $ is a stratified homeomorphism, where $L$ is  a $(n-k)$-compact  pseudomanifold, and $\varphi(x) = (0,\tv ,1)$ (case $k\in \{1, \ldots n-1 \}$).
\end{enumerate}

When $\partial X \ne \emptyset$ the space $X$ is not a  pseudomanifold (see item (iii)) but it always is a stratified space (this comes directly from  from \cite[Proposition A.22]{CST1}, (a), (b) and (c)).

Next results show how to 
refine the stratification of $X$, by considering on $\partial X$ its natural stratification
(b), in order to obtain on $X$  a pseudomanifold structure, denoted by $\nat X$. We find this construction on \cite{MR3028755} in the category of PL-pseudomanifolds.
This procedure is schematized by the following picture:

\begin{center}
\begin{tikzpicture}

\draw (13,3) node {$ X$} ;
\draw(3,3) node {$\nat X$} ;
\draw(8,3.4) node {$I$} ;
\draw [->]  (6,3)--  (10,3);
\draw(8,0) node {$\partial X$} ;

\draw(13,-1) node {\tiny{$\left\{
\begin{array}{l}
1-dimensional  \ strata: 2\\
2-dimensional \ strata: 3
\end{array}
\right.$}} ;

\draw(3,-1) node {\tiny{$\left\{
\begin{array}{l}
0-dimensional  \ strata: 2\\
1-dimensional  \ strata: 5\\
2-dimensional \ strata: 3
\end{array}
\right.$}} ;

\draw (0,0)-- (.5,0);
\draw (1.5,0)-- (2,0);

\draw (2,0)-- (2.5,0);
\draw (3.5,0)-- (4,0);

\draw (4,0)-- (4.5,0);
\draw (5.5,0)-- (6,0);

\draw (2,0)--  (2,2);
\draw (4,0)--  (4,2);

\draw (10,0)-- (16,0);
\draw (12,0)--  (12,2);
\draw (14,0)--  (14,2);

\draw(4,0) node {$\bullet$} ;
\draw(2,0) node {$\bullet$} ;

\draw(1,1) node {\tiny{$R \menos \partial X$}} ;
\draw(3,1) node {\tiny{$R \menos \partial X$}} ;
\draw(5,1) node {\tiny{$R \menos \partial X$}} ;
\draw(2,2.2) node {\tiny{$S \menos \partial X$}} ;
\draw(4,2.2) node {\tiny{$S \menos \partial X$}} ;

\draw(1,0) node {\tiny{$R \cap  \partial X$}} ;
\draw(3,0) node {\tiny{$R \cap \partial X$}} ;
\draw(5,0) node {\tiny{$R \cap \partial X$}} ;
\draw(2,-.2) node {\tiny{$S \cap \partial X$}} ;
\draw(4,-.2) node {\tiny{$S \cap \partial X$}} ;

\draw(11,1) node {\tiny{$R$}} ;
\draw(13,1) node {\tiny{$R $}} ;
\draw(15,1) node {\tiny{$R $}} ;
\draw(12,2.2) node {\tiny{$S $}} ;
\draw(14,2.2) node {\tiny{$S $}} ;
\end{tikzpicture}
\end{center}

\begin{propdef}\label{YY}
Let $X$ be an $n$-dimensional $\partial$-pseudomanifold.
The filtered espace $\nat X_0 \subset \dots \subset \nat X_n=\nat X$,
defined by
$$
\nat{X}_k =X_k \cup (X_{k+1} \cap \partial X),
$$
is an $n$-dimensional pseudomanifold\footnote{Here, $X_{n+1}=X$.}, called the \emph{refinement} of $X$.
 The identity $I \colon \nat X \to X$ is a stratified map.
\end{propdef}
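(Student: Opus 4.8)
The plan is to verify the two assertions of Proposition \& Definition \ref{YY} separately: first that $\nat X$ carries a pseudomanifold structure, then that $I\colon\nat X\to X$ is stratified. For the first part, I would work locally, using the description of the local structure of a $\partial$-pseudomanifold recalled in \ref{ref}. There are three cases for a point $x\in\nat X_k\menos\nat X_{k-1}$, according to whether $x$ lies in $X\menos\partial X$, or in $\partial X$ with a regular conical chart, or in $\partial X$ with a singular conical chart. The interior case $x\in X\menos\partial X$ is immediate: the conical chart of Definition \ref{pseudo} for $X\menos\partial X$ still works, since on $X\menos\partial X$ the filtrations of $X$ and $\nat X$ agree (indeed $\nat X_k\menos\partial X = X_k\menos\partial X$ by the formula $\nat X_k = X_k\cup(X_{k+1}\cap\partial X)$). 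So the real content is at the boundary.

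For a point $x\in\partial X\cap(X_k\menos X_{k-1})$ I would use the conical chart $\varphi$ from \ref{ref}(ii)--(iii). In the top case $k=n$ we have $\varphi\colon V\to\R^{n-1}\times\,]0,1]$; here $x\in\nat X_{n-1}\menos\nat X_{n-2}$ because $\partial X\cap X_n = \partial X\subset\nat X_{n-1}$ and near $x$ the boundary is the $(n-1)$-manifold $\partial X$, which receives the cone $\rc L$ with $L=\emptyset$, i.e.\ $\rc\emptyset=\tv$ a point; one checks $V\cong U'\times\rc\emptyset = U'\times[0,1[$ is a distinguished neighbourhood with link $\emptyset$, matching the new filtration. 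In the case $k\in\{1,\dots,n-1\}$ we have $\varphi\colon V\to\R^{k-1}\times\rc L\times\,]0,1]$ with $L$ an $(n-k)$-dimensional compact pseudomanifold; the key observation is the homeomorphism of filtered spaces
$$
\rc L\times\,]0,1] \;\cong\; \rc\big(\tc L\big)\menos\tc L,
$$
or more usefully, that $\rc L\times[0,1[$ (after absorbing the collar coordinate) is itself the open cone $\rc(\tc L)$ on the closed cone $\tc L$, via $([a,s],t)\mapsto$ a point of $\rc(\tc L)$ — this is precisely the content of the identity $\nat{(\tc L)}\menos L = \rc L = (\tc L)\menos L$ noted in Definition \ref{indu}. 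Then $\tc L$ is an $(n-k)$-dimensional compact pseudomanifold (the closed cone on a compact pseudomanifold, with its cone filtration $(\tc L)_i = \tc L_{i-1}$), so $V\cong\R^{k-1}\times\rc(\tc L)$ is a distinguished neighbourhood exhibiting $x$ in the stratum of formal dimension $k-1 + 0 = k-1$; and one must check the filtration condition (b) of Definition \ref{pseudo}, i.e.\ $\varphi(U\times\rc(\tc L)_j) = V\cap\nat X_{k+j}$, which follows by unwinding $\nat X_m = X_m\cup(X_{m+1}\cap\partial X)$ against the cone filtration of $\tc L$. One also checks $\nat X_k\menos\nat X_{k-1}$ is a manifold of dimension $k$ (or empty): off $\partial X$ it is the manifold $X_k\menos X_{k-1}$ intersected with $X\menos\partial X$, and on $\partial X$ it is $(X_{k+1}\cap\partial X)\menos(X_k\cup(X_k\cap\partial X)) = (X_{k+1}\menos X_k)\cap\partial X$, which is a $k$-manifold since $\partial X$ is an $(n-1)$-pseudomanifold and this is the open cone base $\R^{k}$-piece — glued along the collar to the interior piece.

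For the second assertion, that $I\colon\nat X\to X$ is a stratified map, I would argue as follows. By construction $\nat X_k\subseteq X_k$ for every $k$ (since $X_{k+1}\cap\partial X\subseteq X_{k+1}$, but we need $\subseteq X_k$ — rather, the point is that each stratum $S^\natural$ of $\nat X$ is contained in a single stratum $S$ of $X$). Indeed a stratum of $\nat X$ of formal dimension $k$ lying outside $\partial X$ is a connected component of $(X_k\menos X_{k-1})\cap(X\menos\partial X)$, hence sits inside a stratum of $X$ of formal dimension $k$, so $\codim^\natural = \codim$ there; a stratum of $\nat X$ lying in $\partial X$ has the form $(X_{k+1}\menos X_k)\cap\partial X)_{cc}$, which sits inside the stratum $X_{k+1}\menos X_k$ of $X$, of formal dimension $k+1$, so $\codim_{\nat X}S^\natural = n-k > n-(k+1) = \codim_X S$, and the inequality $\codim S^\natural \ge \codim S^I$ of Definition \ref{def:applistratifieeforte} holds (strictly, for boundary strata). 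Continuity of $I$ is clear since it is the identity on the underlying space. Finally one should note $\nat X$ is indeed a \emph{stratified} space (frontier condition), which follows from the fact that it is a pseudomanifold — already established — by \cite[Proposition A.22]{CST1}, the same citation used in \ref{ref}.

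**The main obstacle** I anticipate is purely bookkeeping: correctly matching the two filtrations through the collar. The subtlety is that near a boundary point the space looks like $\R^{k-1}\times\rc L\times\,]0,1]$, and one must see this as a distinguished neighbourhood for the cone on $\tc L$ while simultaneously checking that the shifted filtration $\nat X_k = X_k\cup(X_{k+1}\cap\partial X)$ is exactly the one induced by the $\tc L$-cone filtration — the "$+1$ shift" in the cone filtration $(\tc L)_i = \tc L_{i-1}$ must be reconciled with the "$+1$ shift" built into the definition of $\nat X_k$, and getting the indices to agree on the nose (including the collar direction $]0,1]$, which contributes a trivial manifold factor and must not shift anything) is where a careless argument would go wrong. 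The picture in the excerpt — where a $1$-dimensional stratum $S$ of $X$ meeting $\partial X$ splits into the $1$-stratum $S\menos\partial X$, the $0$-stratum $S\cap\partial X$, etc., raising the number of strata — is the combinatorial shadow of exactly this reconciliation, and I would use it as a sanity check on the index arithmetic.
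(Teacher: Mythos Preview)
Your overall architecture is right and your treatment of the stratified-map assertion matches the paper's. The gap is in the construction of the conical chart at a singular boundary point.

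You propose that the link at $x=(0,\tv,1)\in\R^{k-1}\times\rc L\times\,]0,1]$ is $\tc L$ \emph{with its cone filtration} $(\tc L)_i=\tc L_{i-1}$. But with that filtration $\tc L$ is \emph{not} a pseudomanifold: its strata have the form $S\times\,]0,1]$ for $S\in\cal S_L$, which are manifolds with boundary, and a point $(a,1)$ on $L\times\{1\}$ has no conical neighborhood of the type required by Definition~\ref{pseudo}. The closed cone with cone filtration is only a $\partial$-pseudomanifold (with $\partial(\tc L)=L$). So your candidate link fails condition (iii) of Definition~\ref{pseudo}, and the chart you build is not a conical chart for a pseudomanifold.

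What the paper does instead is take the link to be $\nat{(\tc L)}$, i.e.\ $\tc L$ with the \emph{refined} filtration of Definition~\ref{indu}. To know that $\nat{(\tc L)}$ is a compact pseudomanifold one needs the very statement being proved, applied to the lower-dimensional $\partial$-pseudomanifold $\tc L$; this forces the argument to be set up as an induction on $\dim X$, which you do not do. The paper then constructs an explicit stratified homeomorphism
\[
h\colon \rc\bigl(\nat{(\tc L)}\bigr)\longrightarrow \nat{\bigl(\rc L\times\,]0,1]\bigr)}
\]
and checks, stratum by stratum, that $h$ matches the two filtrations. Your anticipated ``bookkeeping obstacle'' is exactly here, but the shift you must reconcile is the refinement $\tc L\rightsquigarrow\nat{(\tc L)}$ on the link, not merely the cone-filtration shift; the identity $\nat{(\tc L)}\menos L=\rc L$ that you cite is used in that verification but does not by itself produce the chart.

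A smaller slip: in the regular boundary case $k=n$, the new stratum $\R^{n-1}\times\{1\}$ has codimension~$1$ in $\nat X$, so the link must be a $0$-dimensional compact pseudomanifold, i.e.\ a point. The cone is $\rc\{0\}\cong[0,1[$, not $\rc\emptyset=\{\tv\}$; your line ``$U'\times\rc\emptyset=U'\times[0,1[$'' is internally inconsistent.
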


\begin{proof}
Let us prove that $\nat{X}$ is an $n$-dimensional  pseudomanifold. It is a local question. We proceed by induction on $n$. If $n=0$ then $\nat X=X$ is clearly an $n$-dimensional pseudomanifold.

Since the restriction of both filtrations (that of $X$ and that of $\nat X$) to $\nat X\menos \partial X = X \menos \partial X$ are equal, then it suffices to consider a point $x \in \partial X$ and construct a conical neighborhood in the sense of 1.7.

 Using the open collar neighborhood $N$, we can suppose $X= B\times ]0,1] $, where $B$ is a $(n-1)$- pseudomanifold. The filtration on $X$ is the product filtration
$X_i = B_{i-1} \times  ]0,1] $. 

The point $x$ is of the form $(b,1)$. Now, we can reduce $B$ by considering a conical neighborhood of $b$ in $B$. We distinguish two cases.

\smallskip

(i)  $X= \R^{n-1} \times]0,1] $. Here, the point $x$ is $(0,1)$. The  filtration defining $\nat X$ becomes:
$$\nat X_0 = \ldots = \nat X_{n-2} =\emptyset \subset \nat X_{n-1} =  \R^{n-1}\times 
\{ 1 \}
\subset\nat X_{n} = \R^{n-1} \times  ]0,1] .
$$
Then $\R^{n-1} \times  \tc \{0\} $ is a conical neighborhood of $x$ in $\nat X$.

\smallskip

(ii) $X=\R^k \times \rc L\times  ]0,1] $ where $L$ is a $(n-k-2)$-dimensional compact stratified pseudomanifold, with $k \in \{0, \ldots, n-2\}$. Here, the point $x$ is of the form $(0,\tv,1)$, where $\tv$ is the apex of $\rc L$.
The  refinement $\nat X$ is the product filtration
$
\R^k \times  \left(\rc L \times  ]0,1]\right)^\natural .
$
Without loss of
generality, we can suppose $k=0$, that is, $ X =  \rc L  \times ]0,1] $.

 Since $L$ is a $(n-2)$-dimensional compact  pseudomanifold then the closed cone $ \tc L$ is a $(n-1)$-dimensional $\partial$-pseudomanifold with $\partial \tc L = L$ and, by induction hypothesis, $\nat {(\tc L)}$ is a $(n-1)$-dimensional pseudomanifold.
We end the proof if we  find a  stratified homeomorphism
\be\label{llave}
h \colon \rc \left(\nat{(\tc L) } \right)\to \nat{\left(  \rc L \times ]0,1] \right)}.
\ee
The involved filtrations are
$$
\left( \{\tv \}  \times\{1\} \right) 
 \subset \cdots \subset  
\bigg(\left(   \rc L_{r-2}\times ]0,1] \right) \cup \left( \rc L_{r-1}\times \{ 1 \} \right) \bigg)\subset \cdots \subset 
\left(\rc L \times  ]0,1]  \right), \hbox{ and }
$$
$$
\{ \tw \} \subset \rc \left(\tc L_{r-2}  \cup  L_{r-1} \right) \subset \cdots \subset \rc \left({\tc L } \right),
$$
for $r \in \{ 1, \ldots,n-1\}$, where $\tv $ is the apex of the cones  $\rc L$ and $\tc L$ and $\tw $ is the apex of the cone  $\rc\left(\tc L \right)$. 
This homeomorphism is defined by
\bee
h([[a,x],y]) =
\left\{
\begin{array}{ll}
  ([a, 2xy] ,1 -y)& \hbox{si } x \leq 1/2\\[.3cm]
  ([a,  y],1 -2y(1-x))  & \hbox{si } x \geq 1/2.
\end{array}
\right.
\eee
We verify that previous filtrations are preserved. We proceed in three steps.

\begin{itemize}
\item[+]
The restriction
$$
h \colon \rc  L  \to   \rc L \times \{1\} ,
$$ is
given by
$
h([[a,1],y]) =  ([a,  y],1).
$
Since both induced filtrations are the same: the cone filtration of $\rc L$, then we get that $h$
is a   stratified homeomorphism.

\item[+]
The restriction
$$
h \colon \rc  \{\tv\}   \to  \{\tv \} \times ]0,1]  ,
$$ is
given by
$
h([[a,0],y]) =  ([a,  0],1-y).
$
Both filtrations have just one singular point: $\{\tw\}$ and $ (\tv , 1) $ respectively. Since $h(\tw) = (\tv,1)$ then  $h$
is a   stratified homeomorphism.

\item[+] The restriction
$$
h \colon \rc \left({\tc L }\right)  \menos  \rc\left( \{\tw \} \cup L \right)  \to 
 \ \menos 
 \left(\rc L \times    ]0,1] \right)\left(\{\tv \}  \times ]0,1] \cup \rc L \times  \{1\} \right)
$$
 In fact, this homeomorphism becomes 
the map $h \colon    L  \times  ]0,1[  \times  ]0,1[ \to   L    \times  ]0,1[ \times  ]0,1[  ,$
defined by
$$
h(a,x,y) =
\left\{
\begin{array}{ll}
  (a,1 -y, 2xy )& \hbox{si } x \leq 1/2\\[.3cm]
  (a,1 -2y(1-x),  y)  & \hbox{si } x \geq 1/2.
\end{array}
\right.
$$
 The maps $h$
is clearly a   stratified homeomorphism.
\end{itemize}

\medskip

Let us prove the second part of the Proposition. There are two possibilities for a stratum $T \in \cal S_{\nat X}$:
\begin{itemize}
\item $T = S\menos \partial X$ with $S \in \cal S_X$ and $\codim_{\nat X} T = \codim_X S$, or
\item   $T = (S\cap \partial X)_{cc}$ with $S \in \cal S_X$ and $\codim_{\nat X} T = \codim_X S +1$.
\end{itemize}
In both cases, $I(T) \subset S$ and $\codim_X S \leq \codim_{\nat X} T$. So,  $I$ is a stratified map.
\end{proof}

\begin{remarque}\label{borde}
 The stratification of $\nat X$ is given by
\be\label{Strat}
\cal S_{\nat X} =
\left\{
S \menos \partial X / S \in \cal S_X \right\} 
\cup 
\left\{ (S \cap  \partial X)_{cc}  \ne \emptyset / S \in \cal S_X \right\} 
\ee
 Notice that  $\nat X$ and $X$ are equal as topological spaces. They are different as filtered spaces except if $\partial X = \emptyset$.
Stratifications induced on $\partial X$ from the two filtered spaces $X, \nat X$ are the same. For this reason, we also write $\partial X \subset \nat X$. We have $\nat X \menos \partial X = X \menos \partial X$ as filtered spaces.

\end{remarque}

\section{Tame intersection homology}

The Lefschetz duality establishes an isomorphism between the blown-up intersection cohomology and the  tame intersection homology. In this section, we present the second notion 
(see  \cite{MR2210257,MR2209151,LibroGreg,CST3}). It is an intersection homology  where the intersection chains are not included on the singular part of the pseudomanifold. It uses the strata-depending perversities of MacPherson \cite{RobertSF} instead the original perversities of Goresky and MacPherson \cite{MR572580}. Working with the original perversities we get that the tame intersection homology is the intersection homology of Goresky and MacPherson \cite{MR696691}

%:
%:
%:

\bd\label{def:filteredsimplex}
Let  $X$ be a filtered space.   A
 \emph{regular simplex} is a continuous map, $\sigma\colon\Delta\to X$, where the euclidean simplex $\Delta$ is endowed with a decomposition
$\Delta=\Delta_{0}\ast\Delta_{1}\ast\dots\ast\Delta_{n}$,
called \emph{$\sigma$-decomposition of $\Delta$},
verifying
\begin{itemize}
\item[(a)]  $\sigma^{-1}X_{i} =\Delta_{0}\ast\Delta_{1}\ast\dots\ast\Delta_{i},
$
for each~$i \in \{0, \dots, n\}$. 
\item[(b)] $\Delta_n\ne \emptyset$.
\end{itemize}

 The \emph{perverse degree of } $\sigma$ is  the $(n+1)$-tuple,
$\|\sigma\|=(\|\sigma\|_0,\ldots,\|\sigma\|_n)$,  
where
 $\|\sigma\|_{i}=\dim \sigma^{-1}(X_{n-i})=\dim (\Delta_{0}\ast\cdots\ast\Delta_{n-i})$, 
with the convention $\dim \emptyset=-\infty$.
 \item Given a stratum $S$ of  $X$, the \emph{perverse degree of $\sigma$ along $S$} is defined by \ 
 $$\|\sigma\|_{S}=\left\{
 \begin{array}{cl}
 -\infty,&\text{if } S\cap \im \sigma=\emptyset,\\
 \|\sigma\|_{\codim S}&\text{if } S\cap \im \sigma\ne\emptyset,\\
  \end{array}\right.$$
     
We set $\gd \Delta$ the regular part of the chain $\partial \Delta$. 
We define the cochain $\gd \sigma$ by $\sigma \circ \gd$.
 Notice that $\gd^2=0$.
 We denote by $\hiru \gC * {X;R}$ the chain complex generated by the regular simplices of $X$, endowed with the differential $\gd$.
\ed

\begin{definition}\label{def:perversitegen}
A \emph {perversity on a filtered space} $ X$ is a map
$ \ov {p} \colon \cal S_X \to \Z \cup \{\pm\infty\}$ taking the value ~ 0 on the regular strata.
The  pair $(X, \ov {p}) $ is called a \emph {perverse space.}

The \emph{top perversity} is  the perversity defined by $ \ov {t} (S) = \codim (S) -2 $
  on singular strata. The {\em complementary perversity} of a perversity $\ov p$ is the perversity $D\ov p = \ov t - \ov p$.

Let $f\colon X \to Y$ be a stratified map. The \emph{plull-back} of a perversity $\ov q $ of $Y$ is the perversity $f^*\ov q$ of $X$ defined by $f^*\ov q (S) = \ov q (S^f)$, for each $S \in \cal S_X$. 
\ed

\br\label{percon}
Let $X$ be a stratified space endowed with a perversity $\ov p$.
Consider a subset $Y \subset X$ whose induced filtration has formal dimension equal to  $n$. The pull-back perversity of $\ov p$ relatively to the inclusion $\i \colon Y \hookrightarrow X$ is given by
$
\iota^* \ov p( (S \cap Y)_{cc}) = \ov p (S).
$
For the sake of simplicity we shall write $\iota^* \ov p = \ov p$.

Given a topological manifold $M$, with boundary or not, the canonical projection $\pr \colon  M\times X \to X$ is a stratified map. The pull-back perversity $\pr^*\ov p$ will be also denoted by $\ov p$.

Let us suppose that $X$ is compact. A perversity $\ov p $ on the cone $\rc X$ (resp. $\tc X$) is determinate by a number $\ov p(\tv) \in \Z \cup \{ \pm \infty\}$ and a perversity on $X$, still denoted by $\ov p$. Here $\tv$ is the apex of both cones. These perversities are related by $\ov p(S \times ]0,1[) = \ov p (S)$ (resp. $\ov p(S \times ]0,1]) = \ov p (S)$) for each stratum $S \in \cal S_X$.
\er

\bd
Consider a perverse space  $(X,\ov p)$. A 
simplex
$\sigma\colon\Delta\to X$
 is  \emph{$\ov{p}$-allowable} if
  $$
  \|\sigma\|_{S}\leq \dim \Delta-\codim S+\ov{p}(S),
  $$
   for any stratum $S$. We shall say that $\sigma$ is a \emph{$\ov p$-tame simplex} if $\sigma$ is also a regular simplex.
 A chain $c\in \Hiru \gC * {X;R}$ is said to be
   \emph{$\ov{p}$-tame} if is a linear combination of  $\ov{p}$-tame simplices.
   The chain $c$ is a  \emph{tame $\ov{p}$-intersection chain} if $c$ and $\gd c$ are $\ov{p}$-tame chains.   \ed

We define  $\lau \gC {\ov{p}} * {X;R} $ the complex of $\ov{p}$-tame intersection chains endowed with the differential  $\gd $.
Its homology  $\lau \gH {\ov{p}}{*} {X;R}$  is the \emph{tame $\ov{p}$-intersection homology} of $X$

Main properties of this homology have been developed in \cite{CST3,LibroGreg}.
We have proven in \cite{CST3} that the homology
$\lau \gH {\ov{p}} * {X;R}$
coincides with those of \cite{MR2210257,MR2276609} (see also \cite[Chapter 6]{LibroGreg}).
It is also proved that, in the case where $\ov{p}\leq\ov{t}$, this homology coincides with the original intersection homology
 \cite{MR696691}.

\parrn{Relative tame  intersection  homology}
Let $(X,\ov p)$ be a perverse space. Consider $Y$ be a subset of $X$ endowed with the induced filtration having the same formal dimension. The complex of \emph{relative $\ov{p}$-tame chains} is the quotient
$\lau \gC {\ov p} * {X,Y;R} =  \lau \gC {\ov p} *{X;R}  / \lau \gC  {\ov p}* {Y;R} $. Its homology is the
\emph{relative tame $\ov{p}$-intersection homology}  of the \emph{pervers pair}  $(X,Y,\ov{p})$,
denoted by 
$\lau \gH {\ov{p}} *{X,Y;R}$. We have the long exact sequence:
\begin{equation}\label{equa:suiterelative}
\dots
\to
\lau \gH {\ov p} {k+1} {X,Y;R} 
\to
\lau \gH {\ov p} {k} {Y;R} 
\to
\lau \gH {\ov p} k {X;R} 
\xrightarrow {\pr_*}
\lau \gH {\ov p} k {X,Y;R} 
\to \dots,
\end{equation}
where $\pr \colon \lau \gC {\ov p} * {X;R}  \to \lau \gC {\ov p} * {X,Y;R} $ is the canonical projection

\medskip

\parrn{Perversities relating $X$ and $\nat X$}\label{perve1} One of the two key points for establishing our proof of the Lefschetz duality is the understanding of  the relationship between the tame intersection homology and the blown-up intersection cohomology of $X$ and $\nat X$. This is done by using a particular perversity on $\nat X$.

We consider a $\partial$-pseudomanifold $X$ endowed with a perversity $\ov p$. 
 We fix a partition of the boundary  $\partial X = \partial_1 X\sqcup \partial_2 X$ in two families of connected components.
  Given a perversity $\ov p$ over $X$ we define the perversity $\ov P $ on $\nat X$ in the following way (see \eqref{Strat}):
\bee
\ov P = 
\left\{
\begin{array}{cl}
\infty & \hbox{on the strata included in } \partial_1 X\menos \Sigma_X \\[.2cm]
 -\infty & \hbox{on the strata included in } \partial_2 X\menos \Sigma_X \\[.2cm]
 I^* \ov p & \hbox{on the other strata} 
\end{array}
\right.
\eee

\medskip

\begin{center}
\begin{tikzpicture}

\draw (13,3) node {$ \ov p$} ;
\draw(3,3) node {$\ov P$} ;

\draw (0,0)-- (.7,0);
\draw (1.3,0)-- (2,0);

\draw (2,0)-- (2.7,0);
\draw (3.3,0)-- (4,0);

\draw (4,0)-- (4.7,0);
\draw (5.3,0)-- (6,0);

\draw (2,0)--  (2,2);
\draw (4,0)--  (4,2);

\draw (10,0)-- (16,0);
\draw (12,0)--  (12,2);
\draw (14,0)--  (14,2);

\draw(4,0) node {$\bullet$} ;
\draw(2,0) node {$\bullet$} ;

\draw(1,1) node {\tiny{$\ov p(R)$}} ;
\draw(3,1) node {\tiny{$\ov p(R)$}} ;
\draw(5,1) node {\tiny{$\ov p(R)$}} ;
\draw(2,2.2) node {\tiny{$\ov p(S)$}} ;
\draw(4,2.2) node {\tiny{$\ov p(S)$}} ;

\draw(1,0) node {\tiny{$\pm \infty$}} ;
\draw(3,0) node {\tiny{$\pm \infty$}} ;
\draw(5,0) node {\tiny{$\pm \infty$}} ;
\draw(2,-.2) node {\tiny{$\ov p(S)$}} ;
\draw(4,-.2) node {\tiny{$\ov p(S)$}} ;

\draw(11,1) node {\tiny{$\ov p(R)$}} ;
\draw(13,1) node {\tiny{$\ov p(R)$}} ;
\draw(15,1) node {\tiny{$\ov p(R) $}} ;
\draw(12,2.2) node {\tiny{$\ov p(S) $}} ;
\draw(14,2.2) node {\tiny{$\ov p(S) $}} ;
\end{tikzpicture}
\end{center}
 
\medskip

The relationship between the tame intersection homology of $X$ and that of $\nat X$ is given by the following result.
We find this result for the intersection homology on \cite{MR3028755} in the category of PL-pseudomanifolds

\begin{proposition}\label{calculo}
Let $X$ be a $\partial$-pseudomanifold.
We consider $\ov p$ a perversity on $X$.
 Let $\partial X = \partial_1 X\sqcup \partial_2 X$ a partition of the boundary in two families of connected components. 
The composition 
$
\pr \circ I_* \colon \lau \gC {\ov P } * {\nat X;R} \to  \lau \gC {\ov p}*{X,\partial_1 X;R} 
$
is a chain map inducing the isomorphism
$$
\begin{array}{rclcl}
\mathfrak I_1 &\colon &\lau \gH {\ov P } * {\nat X;R} &\to & \lau \gH {\ov p}*{X,\partial_1 X;R} 
\end{array}
$$
\end{proposition}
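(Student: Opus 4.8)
The plan is to analyze the chain map $\pr\circ I_*$ locally and use a Mayer--Vietoris/induction argument on the depth of the pseudomanifold $X$, together with the cone formula for tame intersection homology. First I would check that $\pr\circ I_*$ is indeed a chain map: since $I\colon\nat X\to X$ is a stratified map (\propref{YY}), it sends $\ov P$-tame simplices to $\ov p$-tame simplices once we verify the allowability inequality survives the codimension shift on boundary strata; the value $\ov P=\infty$ on strata in $\partial_1X\setminus\Sigma_X$ makes those constraints vacuous, while the value $\ov P=-\infty$ on strata in $\partial_2X\setminus\Sigma_X$ forces any $\ov P$-tame simplex to avoid $\partial_2X$ entirely (away from $\Sigma_X$), so composing with the projection $\pr$ onto $\lau\gC{\ov p}*{X,\partial_1X;R}$ kills exactly the part of $I_*(c)$ living in $\partial_1X$ and is well-defined. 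The key geometric point is that a $\ov P$-tame chain on $\nat X$ is the same data as a $\ov p$-tame chain on $X$ that does not meet $\partial_2 X$ and is taken modulo chains in $\partial_1 X$.

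\textbf{Key steps.} I would organize the argument as follows. (1) \emph{Reduction to a local statement.} Both sides are homologies of chain complexes that admit Mayer--Vietoris sequences (subdivision works for tame intersection chains, as in \cite{CST3,LibroGreg}), and $\mathfrak I_1$ is compatible with these sequences; so by the standard bootstrap for intersection homology it suffices to prove the isomorphism for the model neighborhoods of \parref{ref}, i.e. for $X=\R^{n-1}\times\,]0,1]$ and for $X=\rc L\times\,]0,1]$ with $L$ a compact pseudomanifold of lower depth. (2) \emph{The manifold-with-boundary model.} For $X=\R^{n-1}\times\,]0,1]$ there are no singular strata; $\nat X$ has the single codimension-one stratum $\R^{n-1}\times\{1\}$ with $\ov P$-value $\pm\infty$. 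If this component lies in $\partial_1X$ one computes $\lau\gH{\ov P}*{\nat X;R}\cong H_*(\R^{n-1}\times\,]0,1],\R^{n-1}\times\{1\})=0$ and likewise the target is zero; if it lies in $\partial_2X$ both sides are $H_*(\R^{n-1}\times\,]0,1[)\cong R$ concentrated in degree $0$, and $\mathfrak I_1$ is visibly the identity. (3) \emph{The cone model.} For $X=\rc L\times\,]0,1]$ I would use the homeomorphism $h$ of \eqref{llave} from \propref{YY} to rewrite $\nat X\cong\rc(\nat{\tc L})$, so that $\lau\gH{\ov P}*{\nat X;R}$ is computed by the cone formula for tame intersection homology applied to the pseudomanifold $\nat{\tc L}$ with the perversity $\ov P(\tw)$ on its apex. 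On the other side, $\lau\gH{\ov p}*{\rc L\times\,]0,1],\partial_1X;R}$ is computed either directly (if the boundary component is in $\partial_2X$, it simply does not appear and one gets the cone formula for $\tc L$ relative to nothing) or via the relative cone formula (if in $\partial_1X$). Matching the two cone formulas reduces the claim in this case to the induction hypothesis for $\tc L$, whose depth is strictly smaller, together with the bookkeeping that $\ov P(\tw)$ and $\ov p(\tv)$ agree in the relevant range.

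\textbf{Main obstacle.} The technical heart — and the step I expect to be the main obstacle — is the cone computation in step (3): one must show that passing to the refinement $\nat{(\tc L)}$, which replaces $L$ as a codimension-one stratum by its own stratification shifted up by one in codimension, is exactly compensated by (a) the change of perversity $\ov p\rightsquigarrow\ov P$ on those strata and (b) the choice of $\pm\infty$ on the collar strata, and that after all this the cone formulas on both sides have the same cutoff degree. Getting the truncation degrees to line up — in particular checking that the allowability threshold $\dim\Delta-\codim S+\ov P(S)$ on a stratum $(S\cap\partial X)_{cc}$ inside the cone matches the threshold for $S$ on the $X$-side once the $\pm\infty$ collar strata are accounted for — is a careful but essentially combinatorial verification, and it is where the partition $\partial X=\partial_1X\sqcup\partial_2X$ really does its work. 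I would also need to confirm that $\pr\circ I_*$ is surjective on chains at the level of representatives (any $\ov p$-tame chain in $X$ avoiding $\Sigma_X\cap\partial X$ can be pushed off $\partial_2X$ by a small homotopy into the collar, using the collar structure of \defref{partial}(c)), which is what makes the local isomorphisms glue to a global one.
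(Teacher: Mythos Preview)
Your approach is essentially the paper's: verify that $\pr\circ I_*$ is a chain map (the $-\infty$ value on $\partial_2X\setminus\Sigma_X$ forces the ``extra'' boundary face $I_*(\gd\sigma)-\gd I_*(\sigma)$ to land in $\partial_1X$, so $\pr$ kills it), then reduce by Mayer--Vietoris and induction to the local models $]0,1]$ and $\rc L\times\,]0,1]$, handling the latter via the stratified homeomorphism $h$ of \eqref{llave} and the cone formula for tame intersection homology. Two minor remarks: the paper inducts on $\dim X$ (using $\dim\tc L<\dim X$) rather than depth, and your final concern about chain-level surjectivity is unnecessary --- the gluing is done entirely by the Five Lemma applied to compatible Mayer--Vietoris sequences, as you already set up in step~(1).
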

\begin{proof}
We proceed in two steps.

(I) \emph{The operator $\pr \circ I_*$ is a chain map}.
The operator  $I_*\colon \lau \gC {} * {\nat X;R} \to \lau \gC {} * {X;R}$ is well defined. This comes from \cite[Theorem F]{CST1} and the fact that $I(\nat X\menos \Sigma_{\nat  X}) \subset X \menos \Sigma_X$, since $I$ is  a  stratified map (cf. \propref{YY}). We consider the composition 
$
\pr \circ I_* \colon \hiru \gC * {\nat X;R} \to \hiru \gC * {X,\partial_1 X;R} = \hiru \gC * {X;R}/ \hiru \gC * {\partial_1 X;R}.
$
If we prove 
$$
\sigma \colon \Delta \to \nat X \hbox{ is a } \ov P-\hbox{tame simplex } 
\Rightarrow 
\left\{
\begin{array}{ll}
(a) &I_* (\sigma) \colon \Delta \to X \hbox{ is a  $\ov p$-allowable simplex}.\\[,2cm]
(b) & I_* (\gd \sigma) - \gd I_* (\sigma) \hbox{ is a $\ov p$-tame chain of $\partial_1 X$},
\end{array}
\right.
$$
then we obtain (I).

(a) 
The singular strata of $\nat X$ sent to the regular part of $X$ by $I$ are those of the form:
$(R \cap \partial X)_{cc}$ where $R$ is a regular stratum of $X$.
Outside these strata we have $I^*\ov p = \ov P$. Then, \cite[Proposition 3.6, Remark 3.7]{CST3} says that $I_*(\sigma)$ is a $\ov p$-allowable simplex. We get (a).

\medskip

We prove (b).  Let $\Delta_\sigma =  \Delta_0 * \cdots * \Delta_n = \nabla * \Delta_n$ be the $\sigma$-decomposition of $\Delta$. 
If $|\Delta_n|\geq 1$ or $\nabla = \emptyset$ then we have $\gd \sigma =\partial \sigma$ and (b) comes from $I_* \circ \partial =\partial  \circ I_*$. So, we can suppose $|\Delta_n|=0$ and $\nabla \ne \emptyset$.  Let $\sigma' \colon \nabla \to \nat X$ be  the restriction of $\sigma$. 
From $I_* \circ \partial =\partial  \circ I_*$ we get $I_* (\gd \sigma) - \gd I_* (\sigma)  = (-1)^{|\Delta|}  I_*(\sigma')$. Condition (b) becomes
$$
I_* (\sigma') \hbox{ is included on $\Sigma_X$, or is a $\ov p$-tame simplex of $\partial_1 X$.}$$
We can suppose that the filtered simplex
 $
I_*(\sigma') \colon \nabla\to X$ is regular and  prove that  $
I_*(\sigma') $ is a $\ov p$-allowable simplex of $\partial_1 X$.

 Consider $\{ {\sf S}_0 \prec\cdots \prec {\sf S}_a \}$ the family of strata of $\cal S_{\nat X}$ meeting $\im \sigma$. Since the simplices $\sigma$ and $I_*(\sigma')$ are regular, then ${\sf S}_a = (R \menos \partial X)_{cc}$ and ${\sf S}_{a-1} = (R \cap \partial_1 X) _{cc}$ or $(R \cap \partial_2 X) _{cc}$, for a regular stratum $R \in \cal S_X$.
 
 If ${\sf S}_{a-1} = (R \cap \partial_2 X) _{cc}$ the $\ov P$-allowability of $\sigma$ gives 
$$
0 \leq\dim \nabla = ||\sigma||_{  {\sf S}_{a-1}}  \leq \dim \Delta - \codim_{\nat X} {\sf S}_{a-1}+ \ov P({\sf S}_{a-1} ) 
=
-\infty,
$$
 which is impossible. So, we get
  ${\sf S}_{a-1} =(R \cap \partial_1 X)_{cc}$. 
  The family of strata of 
$\cal S_{\nat X}$ meeting $\im \sigma'$ is 
\be\label{S}
{\sf S}_0 =(S_0 \cap \partial_1 X )_{cc}\prec \cdots  \prec{\sf S}_{a-2} =  (S_{a-2} \cap \partial_1 X )_{cc} \prec {\sf S}_{a-1}=(R \cap \partial_1 X )_{cc},
\ee
where  $\{S_0, \ldots, S_{a-2}\}$ are singular strata of $X$.
This implies that the simplex $I_*( \sigma' )$  lies on $\partial_1 X$.
It remains to prove that $I_*(\sigma')\colon \nabla \to \partial_1 X$ is $\ov p$-allowable. 
The family of strata of $\partial_1 X$ meeting $\im I_*(\sigma')$ is 
$\{
{\sf S}_0 , \ldots, {\sf S}_{a-2}, (R \cap \partial_1 X )_{cc}\}
$
(cf. \eqref{S}). Consider singular stata, that is $j \in \{0,\ldots,a-2\}$, and set $\ell = \codim_{\partial X} {\sf S}_j
= \codim_{\nat X} {\sf S}_j-1$. We have
\begin{eqnarray*}
||I_*(\sigma')||_{  {\sf S}_j} &= &||I_*(\sigma')||_\ell = \dim(\Delta_0 * \cdots * \Delta_{n-\ell})  = ||\sigma||_\ell \\
&\stackrel{\sigma \hbox{ \tiny  is } \ov P-\hbox{\tiny allowable}}{\leq} & \dim \Delta - (\ell+1) + \ov P({\sf S}_j ) 
=
\dim \nabla - \ell -
\ov p({\sf S}_j) \\
 &\stackrel{\tiny Rem. \ 2.3}{=}& \dim \nabla - \ell + \ov p (S_j).
\end{eqnarray*}
This gives (b).

\bigskip

(II) \emph{The operator $\pr \circ I_*$ is a quasi-isomorphism}.
We proceed by induction on $\dim X$.
Notice that the result is clear if $\dim X =0$ or $\partial X = \emptyset$.
Using the open collar neighborhood $N$, the equality $\nat X \menos \partial X = X \menos \partial X$ (see \remref{borde}) and the Mayer-Vietoris sequence  \cite[Proposition 7.10]{CST3}, we can suppose $X= \partial X \times ]0,1] $.
We prove that 
$$\pr \circ I_* \colon
\lau \gH {\ov P } * 
{
\nat{( \partial X \times]0,1] )} ;R 
} \to \lau \gH {\ov p}*{
 \partial X \times   ]0,1] ,\partial_1 X \times \{ 1 \};R}$$ is an isomorphism
 by using \cite[Proposition 2.19]{CST77}. Let us verify the four properties.
Property (i) comes directly from  \cite[Proposition 7.10]{CST3}. Property (ii) is verified since the involved chains have compact support.
Property (iv) is straightforward. Let us see property (iii).

We have two different cases to study (see \ref{ref}.7): $X = \R^{n-1} \times [0,1[$ or $X = \R^i \times \rc L \times ]0,1] $, $L$ 
a compact pseudomanifold.
The perversity $\ov p$ of $X$ is  the  perversity $\ov 0$  in the first case. In the second case, the perversity $\ov p$ comes  from a perversity $\ov p$ defined on $\rc L$. 
The boundary $\partial X =  \R^i \times \rc L \times \{1\}$ is connected. We have two different cases to study: $\partial X = \partial_1 X$ or $\partial X = \partial_2 X$. 

\medskip

Since the $\R^{n-1}, \R^i$-factors commute with the operator $\natural,$ then it suffices  prove that
\begin{itemize}
\item[(j)]
$
 \pr \circ I_* \colon
\lau \gH {\ov P } *
 {\nat{]0,1] }  ;R} 
 \to \lau \gH {\ov 0}*{ ]0,1]  ,\{ 1 \};R}
$
is an  isomorphism where $\ov P (\{1\}) = \infty$.

\smallskip

\item[(jj)]
$
  I_* \colon
\lau \gH {\ov P } *
 {\nat{]0,1] }  ;R} 
 \to \lau \gH {\ov 0}*{ ]0,1] ;R}
$
is an  isomorphism where $\ov P (\{1\}) = -\infty$.

\smallskip
 
\item[(jjj)] $
\pr \circ  I_* \colon
\lau \gH {\ov P } *
 {\nat{(  \rc L \times ]0,1]  )}  ;R} 
 \to \lau \gH {\ov p}*{  \rc L \times ]0,1]  ;\rc L \times \{ 1 \};R}
$
is an  isomorphism for the perversity
$
\ov P = 
\left\{
\begin{array}{cl}
 \infty & \hbox{on } L\menos \Sigma_L \times ]0,1[ \times \{1\} \\[.2cm]
 I^* \ov p & \hbox{on the other strata.} 
 \end{array}  
 \right.
 $
 
  \smallskip

\item[(jjjj)] $
 I_* \colon
\lau \gH {\ov P } *
 {\nat{(  \rc L \times ]0,1]  )}  ;R} 
 \to \lau \gH {\ov p}*{  \rc L \times ]0,1]  ;R}
$
is an  isomorphism for the perversity
$
\ov P = 
\left\{
\begin{array}{cl}
 -\infty & \hbox{on } L\menos \Sigma_L \times ]0,1[ \times \{1\} \\[.2cm]
 I^* \ov p & \hbox{on the other strata.} 
 \end{array}
 \right.
 $

 \end{itemize}

 Let us prove these properties. 
 
  \smallskip
  
  (j) The RHS is $\hiru H * {]0,1], \{ 1 \};R} =0$ (see \cite[Definition 4.8 and Proposition 5.5]{CST3}). On the other hand, $\nat {]0,1]}$ is the open cone $\rc \{1\}$ where the induced perversity is given by $\ov P (\{1\}) = \infty$. Form \cite[Proposition 7.9]{CST3} we get $\lau \gH {\ov P } *
 {\nat{]0,1] }  ;R} =0$.
  
  \smallskip
  
    (jj) The RHS is $\hiru H * {]0,1];R} =\hiru H 0 {]0,1];R}  =R$. On the other hand, since the induced perversity is given by $\ov P (\{1\}) = -\infty$ then, from \cite[Proposition 7.9]{CST3}, we get $\lau \gH {\ov P } *
 {\nat{]0,1] }  ;R} = \hiru H *
{ \{1\}  ;R} =R$.

\smallskip

 (jjj) We know from \eqref{equa:suiterelative} and  \cite[Proposition 7.9, Corollaire 7.8]{CST3} that the RHS is 0. We have that $h_* \colon  \lau  \gH  {h^*\ov P} *{ \rc \left( \nat{(\tc L)}\right) ;R}  \to \lau \gH  {\ov P} * {\nat{(  \rc L \times ]0,1]  )}  ;R}  $
  is an isomorphism since $h$ is a stratified homeomorphism. Since 
  $
  h^* \ov P(\{\tw\}) =  \ov P (\{ \tv\} \times \{1\})  =\ov p (\{ \tv\})
  $
we get 
$
\lau  \gH  {h^*\ov P} *{ \rc \left( \nat{(\tc L)}\right) ;R} 
= 
\lau  \gH  {\ov P} {\leq \ov P (\{ \tv\} \times \{1\})}{ \nat{(\tc L)} ;R} 
\stackrel{\tiny induction}{=}\lau  \gH  {\ov p} {\leq \ov p (\{ \tv\})}{ \tc L,L \times \{1\} ;R} =0
$
 from \eqref{equa:suiterelative} and  \cite[Proposition 7.9, Corollaire 7.8]{CST3}.
 \medskip
 
 (jjjj) 
Let us consider the diagram of filtered spaces:
\be \label{diag}
\xymatrix{
& \nat{(\rc L \times ]0,1]  )}\ar[rr]^I & &
\rc L \times ]0,1] &\\
\rc \left( \nat{(\tc L)}\right) \ar[ru]^h& 
\nat{(\tc L)}  \ar[l]_-{\i} \ar[r]^{I_1}&
\tc L 
 & \rc L \ar[l]_{\iota} \ar[u]_J \ar@/_{10mm}/[ll]_{\iota_1} & L, \ar[l]_{\i_1}
}
\ee
where $I$, $I_1$ are defined in \propref{YY}, $h$ is  defined by \eqref{llave}, $\iota$, $\iota_1$ are  inclusions, $\i (x) = \i_1 (x) = [x,1/2]$ and $J(x) = (x,1/2)$. 
We have the equalities $\iota = I_1 \circ \iota_1$ and $I \circ h \circ \i \circ \iota_1 \circ \i_1 = J  \circ \i_1$.

The notation $\ov p$ designes a perversity on: $\rc L \times ]0,1[$,  $\rc L \times ]0,1]$, $\rc L$, $\tc L$ and $L$ (see \remref{percon}).
There are two refined perversities, denoted by $\ov P$,  on $\nat{(\tc L)}$ and $\nat{(\rc L \times ]0,1]  )}$ respectively. 
We also have the perversity $h^* \ov P$ on $\rc \left( \nat{(\tc L)} \right)$.
They are characterized by:
\be\label{nueve}
\left\{
\begin{array}{ll}
\hbox{On } \nat{(\tc L)} : & 
\left\{
\begin{array}{l}
\ov P = \ov p \hbox { on } \rc L \sqcup \left( \Sigma_L \times \{1\}\right)\\
\ov P = \infty  \hbox { on } \left(  L\menos \Sigma_L \times \{1\}\right)
\end{array}
\right.
 \\[,7cm] 
\hbox{On } \nat{(\rc L \times ]0,1]  )}: & 
\left\{
\begin{array}{l}
\ov P = \ov p \hbox { on } \left( \rc L \times ]0,1[\right) \sqcup \left( \Sigma_{\rc L} \times \{1\}\right)\\
\ov P = \infty  \hbox { on } \left(  \left( \rc L\menos \Sigma_{\rc L}\right) \times \{1\}\right)
\end{array}
\right.
 \\[,7cm] 
\hbox{On } \rc \left( \nat{(\tc L)} \right): & 
\left\{
\begin{array}{l}
h^* \ov P(\{\tw\})= \ov p (\{ \tv\} ) \\
h^*  \ov P = \ov P  \hbox { on } \nat{(\tc L)}
\end{array}
\right.
\end{array}
\right.
\ee
 (see \remref{percon}).
We have
\begin{enumerate}[(A)]

\item $h_* \colon  \lau  \gH  {h^*\ov P} *{ \rc \left( \nat{(\tc L)}\right) ;R}  \to \lau \gH  {\ov P} * {\nat{(  \rc L \times ]0,1]  )}  ;R}  $
  is an isomorphism since $h$ is a stratified homeomorphism.

\item $\i_* \colon \lau  \gH  {\ov P} {\leq \ov p (\{ \tv\})}{ \nat{(\tc L)} ;R} =  \lau  \gH  {h^*\ov P} {\leq \ov p (\{ \tv\})}{ \nat{(\tc L)} ;R}  \to \lau \gH  {h^*\ov P}  *{ \rc \left( \nat{(\tc L)}\right) ;R}  $
is an isomorphism since  \eqref{nueve} and  \cite[Proposition 7.9]{CST3}.

\item $I_{1,*} \colon \lau  \gH  {\ov P} *{ \nat{(\tc L)} ;R} \to \lau  \gH  {\ov p} *{ {\tc L} ;R} $   is an isomorphism by induction hypothesis since we have $\dim \tc L < \dim X$.

\item $\iota_{*} \colon \lau  \gH  {\ov p} *{ {\rc L} ;R} \to \lau  \gH  {\ov p} *{ {\tc L} ;R} $   is an isomorphism as follows.  Consider the open covering $\{ U =  {\tc L} \menos (L \times \{1\}) = \rc L,
 V = \tc L\menos \{\tv \} =
  L \times ]0,1] 
  \}$ of $\tc L$. From    \cite[Corollaire 7.8]{CST3} we know that that the inclusion 
  $
  U \cap V = L \times ]0,1[ \hookrightarrow  L \times ]0,1]=V
   $
   induces an isomorphism in homology. Mayer-Vietoris (see  \cite[Proposition 7.10]{CST3} ) gives the claim.
   
\item $J_{*} \colon \lau  \gH  {\ov p} *{ {\rc L} ;R} \to \lau  \gH  {\ov p} *{ {\rc L} \times ]0,1];R}$   is an isomorphism by
 \cite[Corollaire 7.8]{CST3}.
 
 \item $\i_{1,*} \colon \lau  \gH  {\ov p} {\leq \ov p (\{\tv\})} { { L} ;R} \to \lau  \gH  {\ov p} *{ {\rc L} ;R} $   is an isomorphism by  \cite[Proposition 7.9]{CST3}.

\end{enumerate}
We conclude that 
$I_* \colon  \lau \gH  {\ov P} * {\nat{(  \rc L \times ]0,1]  )}  ;R}  \to \lau \gH  {\ov p} * { \rc L \times ]0,1]  ;R}  $
  is an isomorphism.
\end{proof}

\section{Blown-up cohomology}

The cohomology involved in the Lefschetz Duality is the blown-up cohomology, introduced in \cite{CST5,CST77}. We consider in this section a filtered space $X$.

Let $N_{*}(\Delta)$ 
and  $N^*(\Delta)$ denote the  simplicial chain and cochain complexes
of  an euclidean simplex $\Delta$, with coefficients in $R$. 

 \begin{definition}\label{def:thomwhitney}
The \emph{blown-up  complex of $X$ with coefficients in $R$,} $\Hiru \tN*{X;R}$,
is the cochain complex 
 formed by the elements $ \omega $, associating to any regular simplex,
 $\sigma\colon \Delta_{0}\ast\dots\ast\Delta_{n}\to X$,
an element
 $\omega_{\sigma}\in \Hiru \tN * {\Delta} = \Hiru N * {\tc \Delta_0} \otimes \cdots \otimes \Hiru N * {\tc \Delta_{n-1}} \otimes \Hiru N * {\Delta_n}$,  compatible with regular faces. % 
 The differential $\delta\omega \in\Hiru \tN *{X;R}$ is defined by
 $(\delta\omega)_{\sigma}=\delta(\omega_{\sigma})$.
 \end{definition}

\begin{definition}\label{def:degrepervers}
Let $\Delta_0 * \cdots * \Delta_n$ be a regular simplex.
Consider $ \ell \in \{1, \dots, n \} $. %
Associated to any chain $\xi_1 \otimes \cdots \otimes \xi_n \in
\hiru \tN * {\cal H_\ell} = 
\hiru N * {\tc \Delta_0} \otimes \cdots \otimes \hiru N * {\tc \Delta_{n-\ell -1}}
\otimes   \hiru N * {\Delta_{n-\ell }}\otimes \hiru N * {\tc \Delta_{n-\ell + 1}}  \otimes 
\cdots \otimes \hiru N * {\tc \Delta_{n-1}} \otimes \hiru N * {\Delta_n}$ we define 
$
|\xi|_{>n-\ell} = |\xi_{n-\ell+1}| + \cdots + |\xi_n|$, where $|-|$ means degree.
The \emph{$\ell$-perverse degree} of the cochain $\eta \in \Hiru \tN * {\Delta}$ is
is equal to
$$
||\eta||_\ell = \max
\left\{ |\xi |_{>n-\ell}  \ /   \ \xi \in \hiru \tN * {\cal H_\ell} \hbox{ and } \eta (\xi)\ne 0\right\}.
$$
By convention, we set $ \max \emptyset = - \infty $.
\end{definition}

\begin{definition}\label{def:transversedegreeblowup}
Let $\omega$ a cochain of $\Hiru\tN *{X;R}$.
The \emph{perverse degree of $\omega$ along of a singular stratum,} $S \in \cal S$, is equal to
\begin{equation*}\label{equa:perversstrate}
\|\omega\|_{S}=\sup\left\{\|\omega_{\sigma}\|_{\codim S}\mid 
\sigma\colon\Delta\to X \text{ regular with }\sigma(\Delta)\cap S\neq \emptyset\right\}.
\end{equation*}
By convention, we set $ \sup\emptyset = - \infty $.
We denote by $\|\omega\|$  the map associating to any singular stratum $ S $ of $ X  $ the element $\|\omega\|_{S}$ and 0 to a regular stratum.
\end{definition}

\begin{definition}\label{def:admissible}
 Let $(X,\ov{p})$ be a perverse space. A cochain $\omega\in \Hiru \tN *{X;R}$ is \emph{$\ov{p}$-allowable} if
$
\|\omega\|\leq \ov{p}.
$
A cochain $\omega$ is a \emph{$\ov{p}$-intersection cochain} if $\omega$ and its coboundary, $\delta \omega$,
are $\ov{p}$-allowable. 
We denote by $\lau\tN*{\ov{p}}{X;R}$
the complex of  $\ov{p}$-intersection cochains and
 $\lau \IH {\ov{p}} *{X;R}$  its homology, called
 \emph{blown-up  $\ov p$-intersection cohomology} of $X$ with coefficients in~$R$,
for the perversity $\ov{p}$. % 

Let $\cal U$ be an open covering of $X$. The complex $\lau\tN{*,\cal U}{\ov{p}}{X;R}$ is defined as before excepted that a cochain $\omega \in \lau\tN{*,\cal U}{\ov{p}}{X;R}$ is defined only on the regular simplices $\sigma \colon \Delta \to X$ whose image is contained on an element of $\cal U$. 
\end{definition}

\subsection{Relative blown-up intersection  cohomology}
Let $(X,\ov p)$ be a perverse space. Consider $Y$ be a subset of $X$ endowed with the induced filtration and the induced perversity.
 We suppose $\dim X =\dim Y$.
The complex of \emph{relative $\ov{p}$-intersection cochains} is the direct sum
$\lau \tN * {\ov p}{X,Y;R} =  \lau \tN * {\ov p}{X;R}  \oplus \lau \tN {*-1} {\ov p}{Y;R} $, endowed with the derivative $D(\alpha,\beta) = (d\alpha, \i^*\alpha - d\beta)$. 
Its homology is the 
\emph{relative blown-up intersection cohomology} of the perverse pair  $(X,Y,\ov{p})$,
denoted by 
$\lau\IH{*}{\ov{p}}{X,Y;R}$.
We have the long exact sequence associated to the perverse pair $(X,Y,\ov{p})$:
\begin{equation}\label{equa:suiterelative2}
\ldots\to
\lau \IH{i}{\ov{p}}{X;R}
\xrightarrow {\i^*}
\lau \IH i  {\ov{p}} {Y;R} 
\xrightarrow {j^*}
\lau \IH{i+1}{\ov{p}}{X,Y;R}
\xrightarrow {\pr^*}
\lau \IH{i+1}{\ov{p}}{X;R}\to
\ldots,
\end{equation}
where $\pr \colon \lau \tN {*+1} {\ov p}{X,Y;R}  \to \lau  \tN * {\ov p}{X;R} $ is defined by $\pr(\alpha,\beta) = \alpha$ and 
$j \colon \lau \tN * {\ov p}{Y;R}  \to \lau  \tN * {\ov p}{X,Y;R} $ is defined by $j(\beta) = (0,\beta)$.

\bl\label{cov}
Suppose $X$ is a  $\partial$-pseudomanifold. 
Let $Y$ be a union of connected components of the boundary $\partial X$.We consider the open covering
$\cal U = \{ U =\flat^{-1}(Y \times ]0,1]), V = X\menos \flat^{-1}( Y\times ]1/4,1])\}$ of $X$. 
The relative blown-up $\ov{p}$-intersection cohomology  of $(X,Y)$ can be computed with the complex 
$$\lau {\underline N}{ *,\cal U} {\ov p} {X,Y;R} =\{ \omega \in \lau \tN { *,\cal U} {\ov p}{X;R} / \i^*\omega=0\}.
$$
\el
\bpr
The result is clear when $Y=\emptyset$. Let us suppose $Y \ne \emptyset$.
We have seen in \cite[Corollary 9.8]{CST5} that the restriction 
$
\rho_{\cal U} \colon \lau \tN *{\ov p}{X;R} 
\to \lau \tN { *,\cal U}   {\ov p}{X;R} $ induces an isomorphism in homology. So, the restriction 
$
\rho_{\cal U} \oplus \id  \colon   \lau \tN*{\ov p}{X,Y;R} 
\to
\lau \tN  {*,\cal U}  {\ov p}{X,Y;R} =  \lau \tN  {*,\cal U}  {\ov p}{X;R}  \oplus \lau \tN {*-1} {\ov p}{Y;R}
$
also induces an isomorphism in homology.
So, it suffices to prove that the inclusion
$
J \colon \lau {\underline \tN}{ *,\cal U} {\ov p} {X,Y;R}  \to   \lau \tN  {*,\cal U} {\ov p}{X,Y;R} ,
$
defined by $J(\alpha) =(\alpha,0)$, induces an isomorphism in homology.

Consider $h \colon N_Y=\flat^{-1}(Y \times ]0,1]) \to Y$ the map defined by $h(x) = y$ with $\flat (x) = (y,t)$. Notice that $h=\id$ on $Y$.
Consider $a \colon [0,1] \to \R$ a continuous map verifying $a\equiv 0$ on $[0,1/4]$ and $a \equiv 1$ on $[3/4,1]$. Define the continous function $f \colon X \to \R$ by $f\equiv 0$ on $X\menos \flat^{-1}(Y \times ]1/4,1])$ and $f(\flat^{-1}(x,t)) = a(t)$ on $N_Y$.
We consider the associated cochain $\tilde f \in \lau \tN 0 {\ov 0} {X;R}$ 
verifying $\tilde f \equiv 1$ on $Y$ and $\tilde f\equiv 0$ on $X\menos \flat^{-1}(Y \times ]1/4,1])$ (cf. \cite[Definition 10.2]{CST5}).

If $\beta \in \lau \tN *{\ov p} {Y;R}$ the cochain $\tilde f \smile h^* \beta$ lives in $\lau \tN  {*} {\ov p}{N_Y;R}$
(see \cite[Section 4]{CST5}) for the definition of cup product $\smile$).
It  can be extend it to  $\lau \tN  {*,\cal U} {\ov p}{X;R}$ by 0 since it vanishes on
$\flat^{-1}(Y \times ]0, 1/4]) $. It will be denoted also by $\tilde f \smile h^* \beta \in\lau \tN  {*,\cal U} {\ov p}{X;R}$. Notice that $\i^*(\tilde f \smile h^* \beta ) =\beta$.

Let us prove that $J$ induces an isomorphism in cohomology. Consider a cycle $(\alpha,\beta)\in \lau \tN  {*,\cal U} {\ov p}{X,Y;R}$. It is homologous to $(\alpha,\beta) - D(\tilde f \smile h^* \beta,0) = (\alpha -\delta (\tilde f \smile h^* \beta),0) = J(\alpha -\delta (\tilde f \smile h^* \beta)) $ with $\i^*(\alpha -\delta (\tilde f \smile h^* \beta) )= \i^*\alpha -\delta \beta=0$ and $\delta (\alpha -\delta (\tilde f \smile h^* \beta)) =0$. This gives that $J^*$ is an epimorphism. Consider now a cycle $\alpha \in 
\lau {\underline \tN}{ *,\cal U} {\ov p} {X,Y;R}$ and $(\gamma,\eta) \in \lau \tN { *-1,\cal U} {\ov p} {X,Y;R}$ with $D(\gamma,\eta)=J(\alpha)$. As before, $\gamma - \delta (\tilde f \smile \eta) \in \lau {\underline \tN}{ *,\cal U} {\ov p} {X,Y;R}$ and 
$\delta(\gamma - \delta (\tilde f \smile \eta)) = \alpha$. 
This gives that $J^*$ is an monomorphism.
\epr
\medskip

The relationship between the blown-up intersection homology of $X$ and that of $\nat X$ is given by the following result.

\begin{proposition}\label{calculobis}
Let $X$ be a paracompact and separable $\partial$-pseudomanifold.
We consider $\ov p$ a perversity on $X$.
 Let $\partial X = \partial_1 X\sqcup \partial_2 X$ a partition of the boundary in two families of connected components. 
 Consider $\cal U = \{ \flat^{-1}(\partial_2 X \times ]0,1]), X\menos \flat^{-1}(\partial_2 X\times ]1/4,1])\}$ an open covering  of $X$ and 
 $\cal V $ the induced open covering of $\nat X$ by $I$.
The operator
$
I^*  \colon \lau {\underline{\tN}}{*,\cal U}  {\ov p}{X,\partial_2 X;R} 
 \to \lau \tN {*,\cal V}  {\ov P }  {\nat X;R} $
is a chain map inducing the isomorphism
$$
\begin{array}{rclcl}
\mathfrak I_2  & \colon&  \lau \IH * {\ov p}{X,\partial_2 X;R} &\to& \lau \IH * { \ov P }  {\nat X;R} 
\end{array}
$$
\end{proposition}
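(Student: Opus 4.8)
The strategy parallels the proof of \propref{calculo} for tame intersection homology, but now in the cohomological (blown-up) setting, and we reduce everything to the local picture. First I would check that $I^*$ actually lands in the claimed subcomplex: since $I \colon \nat X \to X$ is a stratified map (\propref{YY}), pull-back of cochains is defined, and one must verify that if $\omega$ is $\ov p$-allowable on $X$ then $I^*\omega$ is $\ov P$-allowable on $\nat X$. On the strata of $\nat X$ of the form $(R\cap\partial X)_{cc}$ with $R$ a regular stratum of $X$, the simplices through such a stratum contribute a perverse-degree bound that is automatically satisfied because $\ov P = \infty$ on the part over $\partial_1 X$ and, on the part over $\partial_2 X$, the vanishing condition $\i^*\omega = 0$ (built into $\underline{\tN}$) together with the collar structure forces the relevant perverse degree to be $-\infty$; on the remaining strata $\ov P = I^*\ov p$ so allowability is transported directly. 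The coboundary condition is handled the same way since $\delta$ commutes with $I^*$. This establishes that $I^* \colon \lau{\underline{\tN}}{*,\cal U}{\ov p}{X,\partial_2 X;R} \to \lau\tN{*,\cal V}{\ov P}{\nat X;R}$ is a chain map; combined with \lemref{cov}, which identifies the source with the relative complex computing $\lau\IH *{\ov p}{X,\partial_2 X;R}$, we get an induced map $\mathfrak I_2$ on cohomology.

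\textbf{Reduction to the local model.} To show $\mathfrak I_2$ is an isomorphism I would induct on $\dim X$. The case $\dim X = 0$ or $\partial X = \emptyset$ is trivial (then $\nat X = X$, $\ov P = \ov p$). For the inductive step, use the open collar $\flat \colon N \to \partial X \times ]0,1]$ and the equality $\nat X \menos \partial X = X\menos\partial X$ (\remref{borde}) together with a Mayer--Vietoris argument for blown-up cohomology (\cite[Corollary 9.8]{CST5} and the Mayer--Vietoris / subdivision machinery of \cite{CST5}) to reduce to $X = \partial X \times ]0,1]$. Then a second Mayer--Vietoris over an open cover of $\partial X$, using that the factors $\R^{n-1}$, $\R^i$ commute with the operation $\natural$ and with blown-up cohomology (homotopy invariance, \cite[Corollaire 7.8]{CST3}-type statements in the blown-up setting, i.e. the cohomological analogues from \cite{CST5}), brings us to the four local cases exactly as in the proof of \propref{calculo}: the two trivial cases $\nat{]0,1]} = \rc\{1\}$ with $\ov P(\{1\}) = \infty$ (both sides vanish) or $= -\infty$ (both sides $\cong R$), and the two cone cases $\nat{(\rc L \times ]0,1])}$ with $\ov P = \infty$ or $-\infty$ on the extra boundary strata. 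The cone cases are then handled via the stratified homeomorphism $h$ of \eqref{llave}, the cone formula for blown-up intersection cohomology (\cite[Theorem E / cone computations]{CST5}, the analogue of \cite[Proposition 7.9]{CST3}), and the diagram \eqref{diag}, chasing isomorphisms (A)--(F) in their cohomological form, with the induction hypothesis applied to $\tc L$ since $\dim\tc L < \dim X$.

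\textbf{Main obstacle.} The delicate point is the local case with $\ov P = \infty$ on the extra strata $\big((\rc L \menos \Sigma_{\rc L})\times\{1\}\big)$: one needs the relative blown-up cohomology $\lau\IH *{\ov p}{\rc L\times ]0,1], \rc L \times\{1\};R}$ to vanish and to match it with $\lau\IH *{\ov P}{\nat{(\rc L \times ]0,1])};R}$. Here the subtlety is that $\ov P = \infty$ allows \emph{all} cochains along those boundary strata, so the blown-up complex there is not simply the ordinary one, and one must verify that the cone-formula computation (the analogue of \cite[Proposition 7.9]{CST3} for blown-up cohomology over cones with perversity value $\infty$ at the apex) still applies after transporting along $h$; concretely $h^*\ov P(\{\tw\}) = \ov p(\{\tv\})$, so $\lau\IH{*}{h^*\ov P}{\rc(\nat{(\tc L)});R}$ computes $\lau\IH{*}{\ov P}{\nat{(\tc L)};R}$ truncated in degrees $\le \ov p(\{\tv\})$, which by induction is $\lau\IH *{\ov p}{\tc L, L\times\{1\};R}$ truncated, and this vanishes by the cone/relative exact sequence. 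Keeping the perversity bookkeeping of \eqref{nueve} consistent through all these identifications — especially distinguishing the roles of $\partial_1 X$ (value $\infty$) and $\partial_2 X$ (value $-\infty$, which is what makes $\i^*\omega = 0$ the right condition in \lemref{cov}) — is where the real care is needed; the rest is a faithful cohomological transcription of the argument already carried out for $\mathfrak I_1$.
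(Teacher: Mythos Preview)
Your overall strategy is the paper's: verify $I^*$ is a chain map (using $\i^*\omega=0$ on $\partial_2 X$ to force the perverse degree along the new codimension-one strata to be $-\infty$), then prove it is a quasi-isomorphism by induction, Mayer--Vietoris, and reduction to the four local models, with the cone cases handled via the stratified homeomorphism $h$ of \eqref{llave} and the diagram \eqref{diag}. So far so good.

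However, your local bookkeeping of $\pm\infty$ is reversed. In the cohomological setting the relative part is $\partial_2 X$, where $\ov P=-\infty$; hence in the local interval model the \emph{relative} case is $\ov P(\{1\})=-\infty$ (both sides vanish) and the non-relative case is $\ov P(\{1\})=\infty$ (both sides $\cong R$), exactly opposite to what you wrote. This is not a harmless relabelling: your ``Main obstacle'' paragraph then conflates the two cone cases. You label the delicate case as $\ov P=\infty$ (which \emph{is} the hard one in cohomology, the analogue of case~(jjjj)) but you describe the argument for the \emph{other} case --- the relative one, where both sides vanish and the induction on $\tc L$ produces $\lau\IH *{\ov p}{\tc L, L;R}$. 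In the genuine $\ov P=\infty$ case the induction on $\nat{(\tc L)}$ yields the \emph{non-relative} $\lau\IH *{\ov p}{\tc L;R}$ (since the induced refined perversity on $\nat{(\tc L)}$ again takes the value $\infty$ on the boundary strata), and one must run the full chain of isomorphisms (A)--(F) through \eqref{diag} to match it with $\lau\IH *{\ov p}{\rc L\times ]0,1];R}$; nothing vanishes there. The source of the slip is that in \propref{calculo} the relative part was $\partial_1 X$ (value $\infty$), so the $\pm\infty$ roles in the four local cases flip when you pass from homology to cohomology --- your ``faithful transcription'' is one sign off throughout.
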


\bpr
We proceed in two steps.

(I) \emph{The operator $I^*$ is a chain map.} 
Since $I$ is a stratified map the induced map $I^* \colon \lau \tN * { }  {X;R} \to \lau \tN *{}{\nat X;R} $ is a well defined chain map (cf. \cite[Definition 8.2]{CST5} and so 
$I^*\colon \lau {\underline{\tN}}{*,\cal U}  {\ov p}{X,\partial_2 X;R} 
 \to \lau \tN {*,\cal V}  {}  {\nat X;R} $.
  Let us prove that $I^* \colon \lau {\underline{\tN}}{*,\cal U}  {\ov p}{X,\partial_2 X;R}  \to \lau \tN  {*,\cal V} {\ov P}{\nat X;R} $ is well defined. Recall that $\i \colon \partial_2 X\hookrightarrow X$ denotes the natural inclusion.

  It suffices to consider $\alpha \in \lau \tN {*,\cal U} { \ov p}  {X;R}$, with $\i^*\alpha=0$,  and $ S  \in \cal S_{\nat X}$ a singular stratum and to prove:
$
||I^* \omega||_ S  \leq \ov P ( S ).
$ 
We have
$
||I^*\omega||_ S  \leq ||\omega||_{ S^I}
$
(cf. \cite[Theorem A]{CST5}) and $I^* \ov p ( S )\leq  \ov  P ( S )$ if $S$ is not of the form $(R \cap \partial_2 X)_{cc}$ for a regular stratum $R$ of $X$.
So, it remains to prove
$$
||I^* \omega||_{(R \cap \partial_2 X)_{cc}} =-\infty.
$$
Let $\sigma \colon \Delta \to\nat  X$ be a regular simplex with $\im \sigma \cap (R \cap \partial_2 X)_{cc} \ne \emptyset$
and $\im\sigma \subset I^{-1}(\partial_2 X \times ]0,1])$. We need to prove 
$
||(I^*\omega)_\sigma||_1 = -\infty$ since $\codim_{\nat X} (R \cap \partial_2 X)_{cc}=1$.

The family of strata of $\nat X$ meeting $\im \sigma$ is necessarily of the form
\be\label{descomp}
(S_0 \cap \partial_2 X)_{cc} \prec \cdots \prec (S_{b-1} \cap \partial_2 X)_{cc} \prec
(R \cap \partial_2 X)_{cc} \prec R \menos \partial X,
\ee
where $S_0 \prec \cdots \prec S_{b-1}$ are singular strata of $X$ preceding $R$.
So, if $\Delta_0* \cdots * \Delta_n$ is the $\sigma$-decomposition of $\Delta$ we we get  that the $I_*(\sigma)$-decomposition is the elementary amalgamation $\cal A_\sigma \colon \Delta_0* \cdots * \Delta_n\to \Delta_0 * \cdots * \Delta_{n-2} * (\Delta_{n-1} * \Delta_n)$ (cf. \cite[Section 7]{CST5}). 
Notice that $\im I_*(\sigma )\subset \flat^{-1}(\partial_2 X \times ]0,1])$. So,
$
||I^* \omega||_{(R \cap \partial_2 X)_{cc}}= || \cal A_\sigma^* \omega_{I_*(\sigma)}||_1
$
 (see \cite[Definition 8.2]{CST5}).
Following the definition of $||-||_1$ we need to prove that the restriction of the cochain $\cal A_\sigma^* \omega_{I_*(\sigma)}$ to
$\hiru \tN *{\mathcal H_{n-1}} = \hiru N *{\tc \Delta_0} \otimes \cdots \otimes \hiru N *{\tc \Delta_{n-1}} \otimes \hiru N *{\Delta_{n-1} }\otimes \hiru N *{\Delta_n}$ vanishes (cf. \defref{def:degrepervers}).
Let us consider the commutative diagram:
$$
\xymatrix{
\tc \Delta_0 \times \cdots \times \tc \Delta_{n-2} \times \tc \Delta_{n-1} \times \Delta_n
\ar[r]^-{\cal A_\sigma =\id \times \theta}
&
\tc \Delta_0 \times \cdots  \times \tc \Delta_{n-2} \times (\Delta_{n-1} * \Delta_n)
\\
\tc \Delta_0 \times \cdots \times \tc \Delta_{n-2} \times  \Delta_{n-1} \times \Delta_n
\ar[r]^-{\tiny \pi = projection} \ar@{^(->}[u]
&
\tc \Delta_0 \times \cdots  \times \tc \Delta_{n-2} \times \Delta_{n-1} \ar@{_(->}[u]
}
$$
(cf. \cite[Definition 7.3]{CST5}). So, the restriction of $\cal A_\sigma^* \omega_{I_*(\sigma)}$ to
$\hiru \tN *{\mathcal H_{n-1} }$ is the pull-back $\pi^*$ of the restriction of $\omega_{I_*(\sigma)}$ to $\hiru N *{\tc \Delta_0} \otimes \cdots  \otimes \tc \hiru N *{\Delta_{n-2}} \otimes \hiru N *{\Delta_{n-1}}$. Let us consider the simplex $\sigma' \colon \Delta_0 * \cdots * \Delta_{n-1} \hookrightarrow \Delta \xrightarrow \sigma \nat X \xrightarrow I X$. It is a regular simplex whose associated filtration is exactly $\Delta_0 * \cdots * \Delta_{n-1}$ (cf. \eqref{descomp}).
By compatibility with regular faces, the restriction of $\omega_{I_*(\sigma)}$ to $\hiru\tN *{\tc \Delta_0} \otimes \cdots  \otimes \hiru\tN *{\tc \Delta_{n-2}} \times \hiru\tN *{\Delta_{n-1}}$ is exactly $\omega_{\sigma'}$.
 Finally, since $\sigma'$ is a regular simplex of  $\partial_2 X$ and $\i^*\omega =0$  then $\omega_{\sigma'}=0$.

\bigskip

(II) \emph{The operator $ I^*$ is a quasi-isomorphism}. This ends the proof following \lemref{cov}.
We proceed by induction on the dimension of $X$.
Notice that the result is clear if $\dim X =0$ or $\partial X = \emptyset$.
Using the open collar neighborhood $N$, the equality $\nat X \menos \partial X = X \menos \partial X$ (see \remref{borde}) and the Mayer-Vietoris sequence  \cite[Theorem C]{CST5}, we can suppose $X= \partial X \times ]0,1] $.

Using \cite[Proposition 13.2]{CST5}  we prove that 
$$ I^* \colon
\lau \IH * {\ov p}{
 \partial X \times   ]0,1] ,\partial_2 X;R}
 \to
 \lau \IH *{\ov P }  
{
\nat{( \partial X \times]0,1] )} ;R 
} $$ is an isomorphism. We verify the four properties.
Property (i) comes directly from  \cite[Theorem A]{CST5}. Properties (ii) and (iv) are straightforward. Let us see property (iii).

We have two diffrent cases to study (see \ref{ref}.7): $X = \R^{n-1} \times [0,1[$ or $X = \R^i \times \rc L \times ]0,1] $, $L$ being
a compact pseudomanifold $L$.
The perversity $\ov p$ of $X$ is  the  perversity $\ov 0$  in the first case. In the second case, the perversity $\ov p$ comes in fact from a perversity $\ov p$ defined on $\rc L$. 
The boundary $\partial X =  \R^i \times \rc L \times \{1\}$ is connected. We have two different cases $\partial X = \partial_1 X$ or $\partial X = \partial_2 X$. 

\medskip

Since the $\R^{n-1},\R^i$-factors commute with the operator $\natural$ then it suffices  prove that
\begin{itemize}
\item[(j)]
$
 I_* \colon
 \lau \IH * {\ov p}{ ]0,1]  ,\{ 1 \};R}
 \to
 \lau \IH * {\ov P } 
 {\nat{]0,1] }  ;R} 
$
is an  isomorphism where $\ov P (\{1\}) = -\infty$.

\smallskip

\item[(jj)]
$
 I^* \colon
  \lau \IH * {\ov p}{ ]0,1] ;R}
  \to
  \lau \gH * {\ov P } 
 {\nat{]0,1] }  ;R} 
$
is an  isomorphism where $\ov P (\{1\}) = \infty$.

\smallskip
 
\item[(jjj)] $
 I^* \colon
 \lau \IH * {\ov p} {  \rc L \times ]0,1]  ;\rc L \times \{ 1 \};R}
 \to
 \lau \IH * {\ov P } 
 {\nat{(  \rc L \times ]0,1]  )}  ;R} 
$
is an  isomorphism for the perversity
$
\ov P = 
\left\{
\begin{array}{cl}
 -\infty & \hbox{on } L\menos \Sigma_L \times ]0,1[ \times \{1\} \\[.2cm]
 I^* \ov p & \hbox{on the other strata.} 
 \end{array}  
 \right.
 $
 
  \smallskip

\item[(jjjj)] $
 I_* \colon
 \lau \IH * {\ov p} {  \rc L \times ]0,1]  ;R}
 \to
 \lau \IH* {\ov P } 
 {\nat{(  \rc L \times ]0,1]  )}  ;R} 
$
is an  isomorphism for the perversity
$
\ov P = 
\left\{
\begin{array}{cl}
 \infty & \hbox{on } L\menos \Sigma_L \times ]0,1[ \times \{1\} \\[.2cm]
 I^* \ov p & \hbox{on the other strata.} 
 \end{array}
 \right.
 $

 \end{itemize}
 
 \medskip

 Let us prove these properties. 
 
  \smallskip
  
  (j) The LHS is $\Hiru H * {]0,1], \{ 1 \};R} =0$ (cf. \cite[12.5 and Theorem E]{CST5}). On the other hand, $\nat {]0,1]}$ is the open cone $\rc \{\tv\}$ where the induced perversity is given by $\ov p (\{\tv\}) = -\infty$. From \cite[Theorem E]{CST5} we get $\lau \IH * {\ov P } 
 {\nat{]0,1] }  ;R} =0$.
  
  \smallskip
  
    (jj) The LHS is $\Hiru H * {]0,1];R} =\Hiru H 0 {]0,1];R}  =R$. On the other hand, since the induced perversity is given by $\ov p (\{\tv\}) = \infty$ then, from \cite[Theorem E]{CST5}, we get $\lau \IH * {\ov P } 
 {\nat{]0,1] }  ;R} = \Hiru H *
{ \{\tv\}  ;R} =R$.

\smallskip

 (jjj) We know from \eqref{equa:suiterelative2} and  \cite[Theorems D,E]{CST5} that the LHS is 0. We have that $h^* \colon   \lau \IH * {\ov P}  {\nat{(  \rc L \times ]0,1]  )}  ;R}  \to \lau  \IH *  {h^*\ov P} { \rc \left( \nat{(\tc L)}\right) ;R} $
  is an isomorphism since $h$ is a stratified homeomorphism. 
 Using \eqref{nueve} and \cite[Theorem E]{CST5}
we get 
$
\lau  \IH  * {h^*\ov P}  { \rc \left( \nat{(\tc L)}\right) ;R} 
= 
\lau  \IH {\leq \ov  p \{ \tv\} }  {\ov P} { \nat{(\tc L)} ;R} 
\stackrel{\tiny induction}{=}\lau  \IH  {\leq \ov p (\{ \tv\})}{\ov q} { \tc L,L \times \{1\} ;R} =0
$
 from \cite[Proposition 12.7]{CST5}.
 \medskip
 
 (jjjj) 
Let us consider the commutative diagram of filtered spaces \eqref{diag}.
We have
\begin{enumerate}[(A)]

\item $h^* \colon  \lau \IH * {\ov P}  {\nat{(  \rc L \times ]0,1]  )}  ;R}    \to   \lau  \IH  * {h^*\ov P} { \rc \left( \nat{(\tc L)}\right) ;R}$
  is an isomorphism since $h$ is a stratified homeomorphism.

\item $\i^* \colon   \lau \IH * {h^*\ov P}  { \rc \left( \nat{(\tc L)}\right) ;R}  \to \lau  \IH {\leq \ov p(\tv)}  {h^*\ov P} { \nat{(\tc L)} ;R} =  \lau  \IH {\leq \ov p(\tv)}  {\ov P} { \nat{(\tc L)} ;R}  
 $
is an isomorphism since  \eqref{nueve} and \cite[Theorem E]{CST5}.

\item $I^{*}_1 \colon  \lau  \IH * {\ov p} { {\tc L} ;R} \to \lau  \IH *  {\ov P} { \nat{(\tc L)} ;R} $   is an isomorphism by induction hypothesis since we have $\dim \tc L < \dim X$.

\item $\iota^{*} \colon  \lau  \IH * {\ov p} { {\tc L} ;R} \to  \lau \IH  *{\ov p} { {\rc L} ;R} $  
 is an isomorphism with the same proof of \propref{calculo} (D) using  \cite[Theorems C,D]{CST5}.

\item $J^{*} \colon  \lau  \IH  * {\ov p} { {\rc L} \times ]0,1];R}\to \lau  \IH * {\ov p} { {\rc L} ;R}$   is an isomorphism by
\cite[Theorem D]{CST5}.
 
 \item $\i_1^{*} \colon  \lau  \IH * {\ov p}{ {\rc L} ;R} \to \lau  \IH {\leq \ov p (\tv)} {\ov p}  { { L} ;R} $   is an isomorphism by  \cite[Theorem E]{CST5}.
 
\end{enumerate}
We conclude that 
$I^* \colon   \lau \IH  *{\ov p}  { \rc L \times ]0,1]  ;R} \to \lau \IH  *{\ov P}  {\nat{(  \rc L \times ]0,1]  )}  ;R}  $
  is an isomorphism.
  \end{proof}

\section{Lefschetz Duality}

We prove the main result of this work: the Lefschetz Duality. First, we present the orientation issues, following \cite{LibroGreg}. 

\parrn{Orientation and fundamental class}
An $n$-dimensional $\partial$-pseudomanifold $X$ is  {\em R-orientable} if the  pseudomanifold $X \menos \partial X$ is $R$-orientable. When $X$ is compact, associated to an $R$-orientation,  there exists the {\em  fundamental class}  $\dos \Gamma  X \in  \lau H {\ov 0} n {X, \partial X;R}$ (see \cite[Theorem 8.3.3]{LibroGreg}). We fix a relative cycle $\dos \gamma {X} \in \lau \gC {\ov 0} n {X;R}$ representing this class, that is,  $\gd\dos \gamma {X} \in  \lau \gC {\ov 0} {n-1}{\partial X;R}$ and $\dos \Gamma  X =[\pr(\dos  \gamma  X )]$,
where $\pr \colon \lau  \gC {\ov p} * {X;R} \to \lau  \gC {\ov p} * {X,\partial X;R} $ is the canonical projection.

The operator $\pr \circ I_* \colon 
\lau \gH {\ov 0} * {\nat X;R} \to \lau \gH {\ov 0} * {X,\partial X;R} $
 is a well defined morphism (direct consequence of \propref{calculo}). We can relate the fundamental classes of $\nat X$ and $(X,\partial X)$ as follows. 
 
\begin{proposition}\label{clase}
Let $X$ be an $n$-dimensional  $R$-oriented compact $\partial$-pseudomanifold. Then, the  pseudomanifold $\nat X$ is $R$-orientable. Moreover, if
$\Gamma_{\nat X} \in \lau \gH {\ov 0} n {\nat X;R}$ is the fundamental class  of $\nat X$ then
$\pr \circ I_*(\Gamma_{\nat X}) $ is the fundamental class of $X$.
\end{proposition}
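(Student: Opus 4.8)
The plan is to reduce the statement to the local characterization of the fundamental class of a compact (possibly non-classical) pseudomanifold, resp.\ $\partial$-pseudomanifold, provided by \cite[Theorem 8.3.3]{LibroGreg}.

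\emph{Orientability of $\nat X$.} From \propref{YY} (with the convention $X_{n+1}=X$) the singular set of $\nat X$ is $\Sigma_{\nat X}=\nat X_{n-1}=X_{n-1}\cup(X_n\cap\partial X)=\Sigma_X\cup\partial X$, so the regular part of $\nat X$ is $\nat X\menos\Sigma_{\nat X}=(X\menos\partial X)\menos\Sigma_X$, which is exactly the regular part of the pseudomanifold $X\menos\partial X$ (see \remref{borde}). By hypothesis this manifold carries an $R$-orientation $\mathfrak o$, and this same $\mathfrak o$ is an $R$-orientation of $\nat X$; so $\nat X$ is $R$-orientable and has a fundamental class $\Gamma_{\nat X}\in\lau\gH{\ov 0}n{\nat X;R}$ (note $\partial(\nat X)=\emptyset$, so this is an absolute class).

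\emph{Identification of the fundamental classes.} By \cite[Theorem 8.3.3]{LibroGreg}, $\Gamma_X$ is the unique element of $\lau\gH{\ov 0}n{X,\partial X;R}$ whose image under the restriction $r_x\colon\lau\gH{\ov 0}n{X,\partial X;R}\to\lau\gH{\ov 0}n{X,X\menos\{x\};R}$ (the target canonically $\cong R$ by excision and a Euclidean chart, the map well defined since $\partial X\subset X\menos\{x\}$) is the local orientation $\mathfrak o_x$, for every point $x$ of the regular part $(X\menos\partial X)\menos\Sigma_X$. So it suffices to check $r_x(\pr\circ I_*(\Gamma_{\nat X}))=\mathfrak o_x$ for each such $x$. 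I would do this by chasing the commutative square
\[
\xymatrix{
\lau\gH{\ov 0}n{\nat X;R}\ar[r]^-{\pr\circ I_*}\ar[d] & \lau\gH{\ov 0}n{X,\partial X;R}\ar[d]^{r_x}\\
\lau\gH{\ov 0}n{\nat X,\nat X\menos\{x\};R}\ar[r]^-{\cong} & \lau\gH{\ov 0}n{X,X\menos\{x\};R}
}
\]
in which the bottom arrow is induced by $I$ and is an isomorphism (by excision both sides are computed on a small Euclidean ball about $x$, on which $I$ is the identity), and the square commutes because $I$ is the identity on underlying spaces, fixes $x$, and $\pr$ only kills chains supported in $\partial X\subset X\menos\{x\}$. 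The left vertical arrow is precisely the map characterizing $\Gamma_{\nat X}$ (same theorem, now for the pseudomanifold $\nat X$), so it sends $\Gamma_{\nat X}$ to the local orientation of $\nat X$ at $x$, which is $\mathfrak o_x$ since the orientation used to build $\Gamma_{\nat X}$ is literally $\mathfrak o$. Commutativity then gives $r_x(\pr\circ I_*(\Gamma_{\nat X}))=\mathfrak o_x$ for all $x$, and uniqueness yields $\pr\circ I_*(\Gamma_{\nat X})=\Gamma_X$.

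\emph{Main obstacle.} The delicate point is to make sure the local characterization of the fundamental class from \cite{LibroGreg} applies verbatim both to the $\partial$-pseudomanifold $X$ and to the pseudomanifold $\nat X$ (which does carry codimension-one strata), and that $\pr\circ I_*$ is compatible with restriction to a neighbourhood of an interior regular point. Near such a point $X$, $\nat X$ and $I$ are, respectively, a Euclidean ball, the same ball, and the identity, with $\partial X$ lying outside, so conceptually there is no difficulty; it is simply where the definitions must be tracked with care.
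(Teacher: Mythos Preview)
Your proof is correct and follows essentially the same approach as the paper's own argument: both observe that the regular part of $\nat X$ coincides with that of $X\menos\partial X$ (so the same $R$-orientation works), and both verify that $\pr\circ I_*(\Gamma_{\nat X})$ is the fundamental class of $X$ by checking the local restriction at points $x$ of the interior and invoking the uniqueness in \cite[Theorem 8.3.3]{LibroGreg}. The paper phrases the local comparison via the excision identifications $\lau\gH{\ov 0}*{X,X\menos\{x\};R}=\lau\gH{\ov 0}*{X\menos\partial X,(X\menos\partial X)\menos\{x\};R}$ and the analogous one for $\nat X$ (using \cite[Corollaire 7.12]{CST3}), whereas you package the same content as a commutative square; the substance is identical.
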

\begin{proof}
The orientability of $\nat X$ comes from the fact that $\nat X\menos \partial  X = X \menos \partial X$, as stratified pseudomanifolds. The orientation sheaf ${\cal O}^{\ov 0}$ over $X\menos \partial X = \nat X\menos \partial X$ is therefore the same. We write $\mathfrak o^{\ov 0 }
$ the associated global section.
For the second part, we consider a point $x \in X \menos \partial X$ and we prove that  the restriction of $\pr \circ I_*(\Gamma_{\nat X})$ to $\lau \gH {\ov 0} * {X,X \menos  \{x\};R} = \lau \gH {\ov 0} * {X\menos \partial X,(X \menos \partial X)\menos  \{x\};R}$ \cite[Corollaire 7.12]{CST3} is $ \mathfrak o^{\ov 0}(x)$  \cite[Theorem 8.3.3]{LibroGreg}. This comes from the fact that the restriction of 
$[\gamma_{\nat X}]$ to  $\lau \gH {\ov 0} * {\nat X,\nat X \menos  \{x\}} =\lau \gH {\ov 0} * {\nat X\menos \partial X ,(\nat X\menos \partial X) \menos  \{x\}}$  \cite[Corollaire 7.12]{CST3}  is $\mathfrak o^{\ov 0}(x)$  (cf. \cite[Theorem 8.1.15]{LibroGreg}). 
\end{proof}

\parrn{Duality operator $\dos {\cal D} X$} We consider $\partial X = \partial_1 X\sqcup \partial_2 X$ a partition of the boundary in two families of connected components. 
Using the subdivision operator and the associated homotopy operator of \cite[Proposition 7.10]{CST3} we can suppose that $\gamma_X \in 
 \lau \gC {\ov 0} {n}{ U;R} +  \lau \gC {\ov 0} {n}{V;R},
$
where the covering $\cal U = \{ U,V\}$ is defined in \lemref{cov}, relatively to $Y =\partial_2 X$.

We fix a perversity $\ov p$ on $X$. 
The {\em cup product with the fundamental class} is the homomorphism
$$
\dos {\cal D} X \colon \lau \IH * {\ov p} {X,\partial_2 X;R} \to \lau \gH {\ov p} {n-*} {X,\partial_1 X;R}  
$$
 defined by 
$$
\dos {\cal D} X([\alpha]) =  [\pr(\alpha \frown \dos \gamma { X} )].
$$
where $\alpha$ is a cycle of $\lau {\underline{\tN}}{*,\cal U}  {\ov p}{X,\partial_2 X;R} $ and  $\pr \colon \lau  \gC {\ov p} * {X;R} \to \lau  \gC {\ov p} * {X,\partial_1 X;R} $ is the canonical projection. This operator is well defined since 
$\gamma_X \in 
 \lau \gC {\ov 0} {n}{ U;R} +  \lau \gC {\ov 0} {n}{V;R},
$
 $\pr (\alpha \frown \eta_1) =0$ and 
$\pr (\alpha \frown \eta_2) =0$ ($\alpha$ vanishes on $\partial_2 X$), 
where $\partial\gamma_X = \eta_1 + \eta_2$ is the canonical decomposition relatively to $\partial X = \partial_1 X \sqcup \partial_2 X$.

\begin{theorem}\label{A}
Let $X$ be an $n$-dimensional oriented compact $\partial$-pseudomanifold. Consider a perversity $\ov p$ on $X$. Let $\partial X = \partial_1 X\cup \partial_2 X$ a partition of the boundary in two families of connected components. The cup product with the fundamental class induces the isomorphism
$$
\dos {\cal D} X \colon \lau \IH * {\ov p} {X,\partial_2 X;R} \to \lau \gH {\ov p} {n-*} {X,\partial_1 X;R}  
$$
\end{theorem}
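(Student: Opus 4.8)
The plan is to reduce the Lefschetz duality for the $\partial$-pseudomanifold $X$ to the Poincaré duality for the genuine pseudomanifold $\nat X$ established in \cite{CST77}, using the two comparison isomorphisms $\mathfrak I_1$ and $\mathfrak I_2$ from \propref{calculo} and \propref{calculobis}. The key is to show that the following square commutes up to sign:
\be
\xymatrix{
\lau \IH * {\ov p} {X,\partial_2 X;R} \ar[rr]^{\dos {\cal D} X} \ar[d]_{\mathfrak I_2} & & \lau \gH {\ov p} {n-*} {X,\partial_1 X;R} \\
\lau \IH * {\ov P} {\nat X;R} \ar[rr]^{\dos {\cal D} {\nat X}} & & \lau \gH {\ov P} {n-*} {\nat X;R} \ar[u]_{\mathfrak I_1}
}
\ee
where $\dos{\cal D}{\nat X}$ is the cap product with the fundamental class $\Gamma_{\nat X}$ of $\nat X$, which is the Poincaré duality isomorphism of \cite{CST77} for the perverse pseudomanifold $(\nat X, \ov P)$. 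Since $\mathfrak I_1$ and $\mathfrak I_2$ are isomorphisms and $\dos{\cal D}{\nat X}$ is an isomorphism, commutativity of the square forces $\dos{\cal D} X$ to be an isomorphism.

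\textbf{Key steps.} First I would recall from \cite{CST77} that for the compact $R$-oriented pseudomanifold $\nat X$ (oriented by \propref{clase}) and any perversity $\ov P$, the cap product with the fundamental class gives an isomorphism $\dos{\cal D}{\nat X}\colon \lau \IH * {\ov P}{\nat X;R}\to \lau\gH{\ov P}{n-*}{\nat X;R}$; one must check that the value $\pm\infty$ on the codimension-one strata of $\nat X$ lying in $\partial X\menos\Sigma_X$ causes no trouble, which follows since the duality theorem of \cite{CST77} is stated for arbitrary perversities. Second, I would verify the commutativity of the square at the chain level. The left vertical map $\mathfrak I_2$ is induced by $I^*$ on the cochain side; the right vertical $\mathfrak I_1$ is induced by $\pr\circ I_*$ on the chain side. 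By \propref{clase}, $\pr\circ I_*(\gamma_{\nat X})$ represents the fundamental class of $(X,\partial X)$, so we may take $\gamma_X = \pr\circ I_*(\gamma_{\nat X})$ as our relative cycle (after subdividing so that it is carried by the cover $\cal U$, as arranged before the theorem). Then for a cocycle $\alpha\in\lau{\underline\tN}{*,\cal U}{\ov p}{X,\partial_2 X;R}$ the projection formula $I_*(I^*\alpha \frown \gamma_{\nat X}) = \alpha\frown I_*(\gamma_{\nat X})$ in the blown-up setting (established in \cite{CST77}, and compatible with the present cap product conventions) gives, after applying $\pr$, the identity $\mathfrak I_1(\dos{\cal D}{\nat X}(\mathfrak I_2[\alpha])) = \dos{\cal D} X([\alpha])$ up to the usual sign. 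The bookkeeping with $\partial_1 X$ versus $\partial_2 X$ is exactly mirrored by the two opposite refined perversities $\infty$ on $\partial_1 X\menos\Sigma_X$ and $-\infty$ on $\partial_2 X\menos\Sigma_X$: the $\infty$-strata force the tame chains to avoid $\partial_1 X$ (giving the quotient $\lau\gH{\ov p}{*}{X,\partial_1 X;R}$ via $\mathfrak I_1$), while the $-\infty$-strata force the cochains to vanish on $\partial_2 X$ (giving $\lau\IH * {\ov p}{X,\partial_2 X;R}$ via $\mathfrak I_2$, cf. \lemref{cov}).

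\textbf{Main obstacle.} The technical heart is the chain-level projection formula and checking that all the identifications are compatible: that the cap product $\frown$ of a blown-up cochain with a tame chain is well defined with the $\ov P$-perversity on $\nat X$ and lands in the right tame complex, that $I_*$ and $I^*$ intertwine it correctly (this uses the behavior of $I$ on the $\sigma$-decompositions, in particular the elementary amalgamation appearing in the proof of \propref{calculobis}), and that the relative refinements match up with the covers $\cal U$ and $\cal V$. Once the square is seen to commute up to a global sign, the conclusion is immediate from the three-out-of-four lemma for isomorphisms. I would therefore spend the bulk of the write-up on the naturality of the cap product under the refinement map $I$, and on confirming that the fundamental class chosen is simultaneously adapted to the cover $\cal U$ and of the form $\pr\circ I_*(\gamma_{\nat X})$.
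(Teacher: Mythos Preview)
Your proposal is correct and follows essentially the same route as the paper: reduce to the Poincar\'e duality of \cite{CST77} on the refinement $\nat X$ via the square built from $\mathfrak I_1$, $\mathfrak I_2$, and $\dos{\cal D}{\nat X}$, and verify commutativity using the projection formula for the cap product together with \propref{clase}. The only cosmetic difference is that the paper keeps an arbitrary $\cal U$-small representative $\gamma_X$ and writes $I_*\gamma_{\nat X}-\gamma_X=\xi_1+\xi_2+\partial\xi$, then checks that $\alpha\frown\xi_1$ dies under $\pr$ and $\alpha\frown\xi_2=0$ because $\i^*\alpha=0$, whereas you propose choosing $\gamma_X$ to be $I_*\gamma_{\nat X}$ from the start; the paper also cites \cite[Theorem~A(3)]{CST5} (rather than \cite{CST77}) for the naturality $I_*(I^*\alpha\frown\gamma_{\nat X})=\alpha\frown I_*\gamma_{\nat X}$, and records explicitly that compactness makes $X$ paracompact and separable so that \propref{calculobis} applies.
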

\bpr
We consider the diagram
$$
\xymatrix{
\lau \IH * {\ov p} {X,\partial_2 X;R}  \ar[d]_{\mathfrak I_2} \ar[r]^-{\dos {\cal D} X }& \lau \gH {\ov p} {n-*} {X,\partial_1 X;R}
 \\
\lau \IH * {\ov P } {\nat X;R} \ar[r]^{\dos {\cal D} {\nat X} }& \lau \gH {\ov P } {n-*} {\nat X;R} \ar[u]_{\mathfrak I_1}
 }
 $$
where the vertical arrows and the bottom arrow are isomorphisms (\propref{calculo}, \propref{calculobis} and \cite[Theorem B]{CST77}). 
Notice that $X$ is paracompact and separable. The second assertion comes from the  fact that $X$ is recovered by a finite number  of conical charts (see 1.7) and that each of them is separable. This is clear for the charts of type (ii). For those of type (iii) it suffice to apply induction on $L$.

We end the proof if we show the diagram commutes.

We consider $\nat \gamma \in \lau \gC {\ov 0} n {\nat X;R}$ a generator of the fundamental class $\nat \Gamma$ of $\nat X$,
see \propref{clase}. This Proposition also gives $\xi \in \lau \gC {\ov 0}{n+1} {X;R}$ and $\xi_k \in \lau \gC {\ov 0} n {\partial_k X;R}$, $k=1,2$, with $I_*\gamma_{\nat X} - \gamma_X = \xi_1+ \xi_2 + \partial \xi$.
Let  $\alpha$ be a cycle of $\lau {\underline{\tN}}{*,\cal U}  {\ov p}{X,\partial_2 X;R} $.  Notice that $\alpha \frown \xi_1 \in 
\lau \gC {\ov p} {n-*} {\partial_1 X;R}$ and that  $\alpha \frown \xi_2=0$ since $\alpha$ vanishes on $\partial_2 X$. Then
\begin{eqnarray*}
  \mathfrak I_1  \dos {\cal D}{ \nat X}  \mathfrak I_2( [\alpha])
=
  \mathfrak I_1  \dos {\cal D}{ \nat X} ( [I^*\alpha])
=
  \mathfrak I_1  ( [I^*\alpha \frown \gamma_{\nat X}])
=
[\pr(I_*(I^*\alpha \frown \gamma_{\nat X}))] 
&
\stackrel{ \hbox{\tiny \cite[Theorem A(3)]{CST5}}}{=}
&
\\[,2cm]
[\pr(\alpha \frown I_*\gamma_{\nat X})] 
=
[\pr(\alpha \frown \gamma_X + \alpha \frown \xi_1 + \alpha \frown \xi_2 +\alpha \frown \partial \xi)] 
&=&\\[,2cm]
[\pr(\alpha \frown \gamma_X  \pm \partial (\alpha \frown \xi))] 
=
\dos {\cal D} X([\alpha]).& \qedhere&
\end{eqnarray*}
\epr

\providecommand{\bysame}{\leavevmode ---\ }
\providecommand{\og}{``}
\providecommand{\fg}{''}
\providecommand{\smfandname}{\&}
\providecommand{\smfedsname}{\'eds.}
\providecommand{\smfedname}{\'ed.}
\providecommand{\smfmastersthesisname}{M\'emoire}
\providecommand{\smfphdthesisname}{Th\`ese}

\end{document}